\newtheorem{thm}{Theorem}[section]
\newtheorem{lem}[thm]{Lemma}
\newtheorem{cor}[thm]{Corollary}
\newtheorem{prop}[thm]{Proposition}
\theoremstyle{definition}
\newtheorem{dfn}[thm]{Definition}
\newtheorem{prb}[thm]{Problem}
\theoremstyle{remark}
\newcommand{\im}{\operatorname{im}}
\newcommand{\C}{\mathbb{C}}
\newcommand{\R}{\mathbb{R}}
\newcommand{\N}{\mathbb{N}}
\newcommand{\Z}{\mathbb{Z}}
\newcommand{\Q}{\mathbb{Q}}
\newcommand{\bb}[1]{\mathbb{{#1}}}
\newcommand{\inv}{^{-1}}
\newcommand{\ip}[1]{\langle {#1} \rangle} 
\newcommand{\sm}{\smallsetminus}
\newcommand{\Fr}{\mathrm{Fr}}
\newcommand{\tr}{\operatorname{tr}}
\newcommand{\aut}{\operatorname{Aut}}
\newcommand{\res}{\upharpoonright}
\newcommand{\m}{\operatorname{\mu}}
\newcommand{\lra}{\Leftrightarrow}
\newcommand{\s}[1]{\mathcal{{#1}}} 
\newcommand{\cd}{\mathrm{CD}}
\newcommand{\fiid}{fiid }
\renewcommand{\bf}{\mathbf}
\newcommand{\ax}{\mathsf{a}}
\newcommand{\bx}{\mathsf{b}}
\newcommand{\cay}{\operatorname{Cay}}
\newcommand{\sch}{\operatorname{Sch}}
\newcommand{\fix}{\operatorname{fix}}
\title{Factor maps for automorphism groups via Cayley diagrams}
\author{Riley Thornton}
\begin{document}

\begin{abstract}

We leverage a correspondence between group actions and edge-labelled graphs in two ways. First, we give a unified presentation of several folklore results connecting weak containment, local-global convergence, and continuous model theory. Second, we investigate the difference between $\aut(\cay(\Gamma))$-fiid combinatorics and $\Gamma$-fiid combinatorics for various marked groups $\Gamma$. It's straightforward to see that these differences vanish when $\cay(\Gamma)$ admits an $\aut(\cay(\Gamma))$-fiid Cayley diagram. We extend this to show that the approximate combinatorics are the same when $\cay(\Gamma)$ admits an approximate fiid Cayley diagram, and we give several examples and nonexamples of groups whose Cayley graphs admit (approximate) fiid Cayley diagrams. 

In particular, we show that trees admit approximate Cayley diagrams for any group whose Cayley graph is a tree; Cayley graphs of torsion free nilpotent groups do not admit fiid Cayley diagrams; and there are groups with isomorphic Cayley graphs so that only one them admits even an approximate Cayley diagram (in fact our construction answers a question of Weilacher).

\end{abstract}

\maketitle

\section{Introduction}

The basic idea of this paper is that actions of marked groups correspond exactly to graphs with edge labellings (satisfying certain local combinatorial constraints). For instance, an action of $C_2^{*n}$ is the same as an edge $n$-colored graph: Given an action we can form the Schreier graph and color edges by the generators that induce them (a generator $\gamma$ may have some fixed points, which won't see an edge of color $\gamma$), and given a graph with an edge coloring we can let each generator act by swapping the endpoints of edges with the corresponding color.

We put this idea to work in two ways. We give a unified exposition of the connections between three closely related notions from ergodic theory, combinatorics, and model theory. And, we investigate when $\Gamma$-fiid processes can be lifted to $\aut\bigl(\cay(\Gamma)\bigr)$-fiid processes for various marked groups $\Gamma$.

We are generally interested in finding measurable solutions to combinatorial problems on pmp graphs (elsewhere called graphings). For instance, if I have a pmp action of $F_2$ how large of an independent set can find in its Schreier graph? What about an arbitrary 4-regular treeing? Or, can we find a 2-regular spanning tree in each component of the Schreier graph of a $\Z^2$ action? What about in any graphing whose components are all square grids? 

The three notions we explore in the first section of this paper all try to capture something about the local statistics that can be approximated by labellings of a pmp graph. Namely, we will explain the exact relation between weak containment from dynamics, local-global convergence from combinatorics, and existential theories from continuous model theory. 

In the second section we investigate when $\Gamma$-factor of iid labellings of $\cay(\Gamma)$ (or equivalently labelling of the $\Gamma$-Bernoulli shift) can be transferred to $\aut\bigl(\cay(\Gamma)\bigr)$-fiid labellings (or equivalently, labellings of the Bernoulli graphing over $\cay(\Gamma)$.) For example, most known constructions of $F_n$-fiid processes in fact yield labellings for any $2n$-regular treeing. Our results explain this phenomenon to some extent, and they allow analysis of $\aut(T_{2n})$-fiid maps to rule out some $F_n$-fiid processes.

\subsection{Statement of results}

Folklore has it that weak-containment, local-global convergence, and $\Sigma_1$-equivalence are all (in some sense) different languages for capturing the same phenomena. In the fist section of this paper, we'll explain the equivalences in the detail and record proofs. The statements are the cleanest for free actions:

\begin{thm}
    For free pmp actions $\ax,\bx:\Gamma\curvearrowright (X,\mu)$, the following are equivalent:
\begin{enumerate}
    \item $\ax$ weakly contains $\bx$
    \item The local statistics of $\bx$-labellings are all approximated by local statistics of $\ax$-labellings
    \item The $\Sigma_1$ theory of $M(\bx)$ is dominates that of $M(\ax)$, where $M(\ax)$ is $L^2(X,\mu)$ equipped with the automorphisms coming from $\ax$.
\end{enumerate}
\end{thm}

Here, we say $\ax$ weakly contains $\bx$ to mean some ultrapower of $\ax$ factors onto $\bx$. And, $M(\ax)$ is the continuous model associated to $\ax$, i.e.~$L^2(X,\mu)$ as a commutative $*$-algebra on a Hilbert space equipped with automorphisms induced by $\ax$. There are similar theorems for the associated equivalence and convergence notions. 

For nonfree actions, local-global equivalence includes information about the ismorphism types of components in the Schreier graph, which complicates matters:

\begin{thm}
    For a pmp actions, $\ax,\bx:\Gamma\curvearrowright (X,\mu)$ the following are equivalent
\begin{enumerate}
    \item The labelled Schreier graph $\ax$ local-global contains that of $\bx$
    \item The $\Sigma_1$-theory of $M^+(\ax)$ is dominated by that of $M^+(\bx)$, where $M^+(\ax)$ is $M(\ax)$ equipped with constants for the set of fixed points of each group element
    \item $\ax$ weakly-contains $\bx$ and the factor map from the ultrapower preserve stabilizers.
\end{enumerate}
\end{thm}

These subtleties all disappear for the equivalence and convergence notions, though. Lastly, we can extend these equivalences to pmp graphs by noting that every graph of degree at most $\Delta$ is induced by an action of $C_2^{*(2\Delta+1)}$. We call such an action a marking.

\begin{thm}
For pmp graphs $G$, $H$, the following are equivalent
    \begin{enumerate}
        \item $G$ local-global contains $H$
        \item For any marking $\bx$ of $H$, there is $G'$ local-global equivalent to $G$ and a marking $\ax$ of $G'$ which weakly contains $\bx$
        \item There is some marking $\bx$ of $H$ and some $G'$ which is local-global equivalent to $G$ and has a marking which weakly contains $\bx$.
    \end{enumerate}
\end{thm}

With these theorems in hand, tools like ultraproducts and the L\"owenheim--Skolem theorem give immediate proofs of various compactness, continuity, and representation theorems for the space of local-global convergence.

In the second section of this paper, we will try to understand to what extent there are differences in the measurable combinatorics of free $\Gamma$ actions and the measurable combinatorics of graphs locally isomorphic to $\cay(\Gamma)$. Of course, this depends on the (marked) group $\Gamma$.

We will focus on the simplest case of a $\Gamma$ action-- the Bernoulli shift. The corresponding construction for pmp graphs is the Bernoulli graphing over $\cay(\Gamma)$. Measurable labellings of the Bernoulli shift and the Bernoulli graphing correspond to $\Gamma$-fiid and $\aut(G)$-fiid processes respectively. (This nomenclature is unfortunate in this context, but somewhat standard).

Ostensibly finding an $\aut(G)$-fiid labelling of $G$ is stronger than finding a $\Gamma$-fiid labelling. For instance, Rahman and Virag give asymptotically optimal upper bounds on $\aut(T_n)$-fiid independent sets \cite{RahmanVirag}. This doesn't \emph{a priori} give us upper bounds on the density of $F_n$-fiid independent sets, yet every nontrivial construction we know of for finding $F_n$-fiid labellings (such as the Lyons--Nazarov matching theorem and the matching lower bound for independent sets) in fact gives $\aut(T_{2n})$-fiid labellings. We want to know: Is there separation? Or are there transfer theorems?

We show that these questions boil down to whether one can find an $\aut\bigl(\cay(\Gamma)\bigr)$-fiid Cayley diagram.

\begin{thm}
    Consider a marked group $\Gamma$ with Cayley graph $G$. The following are equivalent:
    \begin{enumerate}
        \item For any $\Gamma$-fiid labelling of $G$ there is an $\aut(G)$-fiid labelling with the same local statistics
        \item $G$ admits an $\aut(G)$-fiid $\Gamma$-Cayley diagram
    \end{enumerate}
    And, the following are equivalent:
    \begin{enumerate}
        \item[(1*)] For any $\Gamma$-fiid labelling of $G$ of $f$, the local statistics of $f$ are approximated by statistics of $\aut(G)$-fiid labellings 
        \item[(2*)] $G$ admits an $\aut(G)$-fiid approximate Cayley diagram.
    \end{enumerate}
\end{thm}

And we prove the existence and non-existence of Cayley diagrams for a number of groups. For instance:

\begin{thm}
    For any torsion free nilpotent group $\Gamma$, $\cay(\Gamma)$ does not admit an $\aut\bigl(\cay(\Gamma)\bigr)$-fiid Cayley diagram.
\end{thm}

\begin{thm}
    The $n$-regular tree, $T_n$, admits an approximate Cayley diagram for any group whose Cayley graph is $T_n$.
\end{thm}

As a corollary, the $\aut(T_{2n})$- and $F_n$-fiid independence numbers of trees are the same. So, the Rahman--Virag bound applies to $F_n$ as well.

\subsection{Notation and conventions}

A graph on a vertex set $V$ is a symmetric irreflexive subset of $V^2$. We do not allow graphs to have loops or parallel edges. So, the edges in a graph are ordered pairs, and if $(x,y)$ is and edge so is $(y,x)$.

A marked group is a group with a specified symmetric generating set. We usually leave the generating set implicit in our notation. If $\Gamma$ is a marked group with a symmetric generating set $E$, and $\ax:\Gamma\curvearrowright X$ is any action of $\Gamma$, we can form the associated Schreier graph on $X$:
\[\sch(\ax):=(X,\{(x,\gamma\cdot x): x\in X, \gamma\in E, \gamma\cdot x\not=x\}).\] Note that for this definition to give us a graph, we need $E$ to be symmetric and we need to ignore fixed points. 

The Cayley graph of $\Gamma$, $\cay(\Gamma)$, is the Schreier graph of $\Gamma$ acting on itself by multiplication. So, any free action of $\Gamma$ has a Schreier graph where every component is isomorphic to $\cay(\Gamma)$. There is one ambiguity in this definition: whether $\Gamma\curvearrowright\Gamma$ is by left or right multiplication. Of course it doesn't matter in any deep way, but we need to set a convention. We'll choose left multiplication. So,
\[\cay(\Gamma)=\{\Gamma, \{(x,\gamma x): x\in \Gamma, \gamma\in E\}.\]

For a permutation group $\bb G\leq \{f\in A^A: f\mbox{ is a permutation}\}$, and any set $X$, the shift action of $\bb G$ on $X^A$ is
\[\mathsf{s}: \bb G\curvearrowright X^A\]
\[\gamma\cdot f=f\circ \gamma\inv.\] Typically, $\bb G$ will be an automorphism group of some countable graph. In case $\bb G$ is the automorphism group of a Cayley graph, $\Gamma$ acts on $\cay(\Gamma)$ by \emph{right} multiplication. So, we have an embedding
\[\ip{\cdot}: \Gamma\hookrightarrow \bb G\]
\[\ip{\gamma}(x)=x\gamma\inv.\] And, the shift action is given by
\[\mathsf{s}: \Gamma\curvearrowright X^A\]
\[(\gamma \cdot f)(x)=(f\circ \ip{\gamma}\inv)(x)=f(x\gamma).\]

We will construct a number of exotic measure spaces throughout this paper, but we are interested mostly in their standard quotients. By a standard probability space, we mean the completion of a regular Borel probability measure on completely metrizable space. There several equivalent conditions, but all standard spaces isomorphic to an interval with Lebesgue measure (possibly along with a few atoms). see \cite{msrthry} and for stronger results in the Borel setting \cite{Kechris}.

For a set $A\subseteq X$, we write $\bf 1_A$ for the characteristic function of $A$:
\[\bf 1_A(x)=\left\{\begin{array}{cc} 1 & x\in A\\
 0 & else\end{array}\right. .\] Aside from characteristic functions, we typically reserve bold characters for random variables. Typically the domains of our random variables will be product spaces of the form $([0,1]^V, \lambda^V)$ where $\lambda$ is Lebesgue measure and $G=(V, E)$ is a countable graph (or a quotient of such a space). Of course $([0,1],\lambda)$ is isomorphic to any standard probability space, such as $([0,1]^\N,\lambda^\N)$. So, in practice, we assume every vertex of our graph can independently sample any source of randomness we require infinitely often.

Usually, our random variables will take values in $A^V$ for some $A$ and will be $\Gamma$-equivariant for some $\Gamma\leq \aut(G)$; this is what is meant by an fiid labelling of $G$. More generally, $\bf f$ is $\Gamma$-fiid if $\Gamma$ acts on both $V$ (and by extension $[0,1]^V$) and the codomain of $\bf f$, and for all $\gamma\in \Gamma$ \[\bf f(\gamma\cdot x)=\gamma \cdot \bf f(x).\] We suppress the dependent variable in our notation. So, we write $\bb P\bigl(\phi(\bf f)\bigr)$ for the probability that $\bf f$ has property $\phi$, e.g. for a vertex $v\in V$
\[\bb P\bigl(\bf f(v)=3\bigr)=\lambda^V\bigl(\bigl\{x:\bigl(\bf f(x)\bigr)(v)=3\bigr\}\bigr)\]

Similarly $\bb E\bigl(g(\bf f)\bigr)$ is the expected value of $g(\bf f)$, e.g.
\[\bb E\bigl(\# \bigl\{ u\in B_3(v):\bf  f(u)=3\bigr\}\bigr)=\int_x \# \bigl\{ u\in B_3(v): \bigl(\bf f(x)\bigr)(u)=3\bigr\}.\]

\subsection{Acknowledgements} Thanks to Andrew Marks, Clinton Conley, Eric Wang, and Laszlo T\'oth for helpful conversations and comments. This work was partially supported by NSF MSPRF grant DMS-2202827.

\section{Weak containment and related notions}

An ultrapower of a pmp group action is a kind of compactification of the action with respect to labellings or maps out of the space. For instance, if an action admits $\epsilon$-invariant sets for all $\epsilon$, then we can take a limit of their characteristic functions to find the characteristic function of an invariant set in the ultrapower. Ultrapowers are studied extensively in, for example, \cite{CKTD}. One can take as definition the point realization of the ultrapower $M(\ax)^{\s U}$ given in Definition \ref{dfn: ultraproducts}.

\begin{dfn}\label{dfn: weak containment actions}
    A pmp action $\ax:\Gamma\curvearrowright (X,\mu)$ \textbf{contains} an action $\bx:\Gamma\curvearrowright Y$ if $\ax$ factors onto $\bx$ via a measure preserving equivariant map.

    An action $\ax$ \textbf{weakly contains} an action $\bx$ if some ultrapower of $\ax$ contains $\bx$. In this case, we write
    \[\bx\preccurlyeq_w \ax.\]
\end{dfn}

This is a somewhat nonstandard definition, but it is equivalent to any of the other definition you like. There are many; for a survey, see \cite{GlobalAspects}. In essence, approximate properties of an action are reflected in exact properties of its ultrapower, so weak containment can be read as saying that $\ax$ approximately factors onto $\bx$.

There are number of folklore results which connect weak containment to notions from model theory and combinatorics. In this section, we'll explain these connections in detail. We start by considering the case of free pmp actions of groups, then we explain the modifications and subtleties involved with non-free actions, and finally we translate the theory of non-free actions to a theory of pmp graphs.

\subsection{Model theory notions}

For the sake of notational convenience, I will assume $\Gamma=\ip{\gamma_1,...,\gamma_n}$ is finitely generated.

\begin{dfn}
    For a (possible nonstandard) measure preserving action $\ax:\Gamma\curvearrowright X$, the associated \textbf{continuous model} is the structure
    \[M(\ax)=\bigl(L^2(X,\mu), \ax(\gamma_1),...,\ax(\gamma_n)\bigr)\]
    where we abuse notation slightly and write $\ax(\gamma)$ for the automorphism of $L^2(X,\mu)$ given by the action: $\ax(\gamma)(f)=\gamma\cdot_{\ax} f,$ so $\bigl(\ax(\gamma)(f)\bigr)(x)=f(\gamma\inv\cdot_{\ax} x)$. 

    We give $L^2$ the usual metric
    \[d(f,g)^2=\int_X |f-g|^2\;d\mu.\] When we speak of separability of $M(\ax)$, we mean with respect to this metric. And, we equip $L^2(X,\mu)$ with pointwise multiplication and conjugation as well as the usual Hilbert space operations.

    A \textbf{map} from $M(\ax)$ to $M(\bx)$ is an equivariant linear map that preserves pointwise products, pointwise conjugates, and inner products.
\end{dfn}

Viewing $M(\ax)$ as an action of $\Gamma$ on a commutative tracial {von Neumann} algebra, we can invoke a generality duality theory. The essential point is that we can recover $\ax$ from $M(\ax)$ as follows:
\begin{prop}
    For any pmp action $\ax$, 
    \begin{enumerate}
        \item $f\in M(\ax)$ is a characteristic function if and only if $f^2=f=f^*$
        \item The measure of a set $A$ is $\tr(\bf 1_A)=\ip{\bf 1, \bf 1_A}$
        \item For any sets $A, B$, the Boolean operations become: $\bf 1_{A\cap B}=\bf 1_A\bf 1_B$ and $\bf 1_{A\cup B}=\bf 1_A+\bf 1_B-\bf 1_{A\cap B}$
        \item The action becomes: $\bf 1_{\gamma A}=\gamma\inv \bf 1_{A}. $
    \end{enumerate}
\end{prop}
More generally, given any unital commutative $*$-algebra on a Hilbert space, $\s A$, we can construct a Boolean algebra equipped with a premeasure by considering projections in the algebra as above. Automorphisms of $\s A$ translate into automorphisms of the Boolean algebra, And, if the algebra is separable and we only have finitely many automorphisms (or even countably many) automorphisms, we can always find a point realization, i.e.~$(\s A, f_1,..., f_n)\cong M(\bx)$ for some pmp action $\bx$. This duality lets us study $\ax$ by looking at algebraic properties of isometries of some metric space, so we can apply the tools of continuous model theory.

The following is a special case of a more general notion of ultraproducts for metric structures:

\begin{dfn}\label{dfn: ultraproducts}
    For a sequence of actions $\ax_i:\Gamma\curvearrowright (X_i,\mu_i)$ and an ultrafilter $\s U$ on $\N$, the \textbf{ultraproduct} of the $M(\ax_i)$s is:
    \[\lim_{\s U} M(\ax_i)=\bigl (\lim_{\s U} L^2(X_i,\mu_i), \lim_{\s U} \ax_i(\gamma_1),...,\lim_{\s U} \ax_i(\gamma_n)\bigr).\] Here $\lim_{\s U} L^2(X_i,\mu_i)$ is the ultraproduct as a tracial {von Neumann} algebra, i.e. the structure with domain \[\{\ip{x_i:i\in\N}: (\exists N)(\forall i)\; x_i\in M(\ax_i)\mbox{ and }|x_i|<N\}/\sim_{\s U}\] where \[ x \sim_{\s U} y\;:\lra \;\lim_{\s U} d(x_i,y_i)=0.\]
    
    We call the $\sim_{\s U}$ equivalence class $[x]_{\s U}$ the \textbf{ultralimit} of the sequence $x$. All operations are defined as pointwise ultralimits. For instance:
    \[[x]_{\s U}+[y]_{\s U}= [\ip{x_i+y_i:i\in\N}]_{\s U}\] 
    \[ \tr\bigr([x]_{\s U}\bigl)=\lim_{\s U}\tr(x_i)\]and \[\ax^{\s U}(\gamma)\bigl([x]_{\s U}\bigr)=[\ip{\ax_i(\gamma)(x_i):i\in\N}]_{\s U}.\]

    These limits all exists since we restrict to bounded sequences. We write $M(\ax)^{\s U}$ for the case where all the $\ax_i$ are identical to $\ax$. In this case, we call $M(\ax)^{\s U}$ the \textbf{ultrapower} of $M(\ax)$.
\end{dfn}

One can check that $\lim_{\s U} M(\ax_i)$ is a commutative tracial {von Neumann} algebra equipped with a set of automorphisms which generate an action of $\Gamma$ (i.e.~it has nice enough algebraic properties to recover a Boolean algebra and premeasure as sketched above). One way to check this is to use Theorem \ref{thm:los} below. The ultralimit is not separable (unless it's finite dimensional), but it turns out that $\lim_{\s U} M(\ax_i)$ is isomorphic to $M(\ax)$ for some action of $\ax:\Gamma\curvearrowright (\Omega,\mu)$ on a nonstandard probability space. This nonstandard point realization $\ax$ is one definition of the ultraproduct of actions.
\begin{dfn}
    We say that a (possibly nonstandard) action $\ax$ is an \textbf{ultraproduct} of actions $\ip{\ax_i: i\in\N}$ with respect to an ultrafilter $\s U$ if $M(\ax)\cong \lim_{\s U} M(\ax_i)$. And we write $\ax=\lim_{\s U} \ax_i$. For $\ax$ a (standard) pmp action we call $\ax^{\s U}:=\lim_{\s U} \ax$ the \textbf{ultrapower} of $\ax$.
\end{dfn}

Continuous model theory has isolated a useful class of properties preserved by ultraproducts. The rest of this subsection is stated only for the language of actions, but holds much more generally \cite{Survey}.

\begin{dfn} \label{dfn: language}
    A \textbf{term} is a function\footnote{Technically, a term isn't a function but a formal symbol with a canonical interpretation as a function in any $*$-algebra. You can find detail in the survey.} from (some power of) a $*$-algebra to itself built from the $*$-algebra operations and the application of automorphisms\footnote{You can think of terms as nouns in the language of actions.}. In more detail:
    \begin{itemize}
        \item For each $i$, the function $\tau(x_1,...,x_n)=x_i$ is a term
        \item The constant function $\tau=\bf 1$ is a term.
        \item If $\tau_1$ and $\tau_2$ are terms, $\gamma\in \Gamma$, and $\lambda\in \C$, the following are all terms:
        \[\tau_1+\tau_2,\;  \tau_1\tau_2,\;  \tau_1^*, \; \gamma\cdot \tau_1, \; \lambda\tau_1.\]
    \end{itemize}

    A \textbf{quantifier free formula} is a function from a $*$-algebra on a Hilbert space to $\R$ built by taking traces of terms and using continuous connectives\footnote{You can think of formulas as predicates in the language of actions; the value of a formula is how far it is from being true.}:
    \begin{itemize} 
    \item If $\tau_1(x_1,...,x_n)$ and $\tau_2(x_1,...,x_n)$ are terms, then $\Re\bigl(\ip{\tau_1(x_1,...,x_n), \tau_2(x_1,...,x_n)}\bigr)$ is a quantifier free formula, as is the imaginary part of the inner product.
    \item If $\phi_1,\phi_2,...,\phi_n$ are quantifier free formulas and $c: \R^n\rightarrow\R$ is uniformly continuous, then $c(\phi_1,...,\phi_n)$ is also a quantifier free formula.
    \end{itemize}
\end{dfn}

For example, $fg^*$ and $f-g$ are two-variable terms, $d(f,g)=\sqrt{\ip{(f-g),(f-g)}}$ is a two-variable quantifier free formula. We'll sometimes speak of complex-value functions as being formulas, in which case we mean the real and imaginary parts. In continuous model theory, the quantifiers are $\sup$ and $\inf$. One annoying technicality is that a $\sup$ or $\inf$ might be infinite. Model theorists deal with this by truncating all formulas and working in the unit ball of $L^2(X,\mu)$. We'll add a condition on what sort of formulas we can quantify over. This all just amounts to some inconvenient bookkeeping.

\begin{dfn}
    An \textbf{existential formula}, or \textbf{$\Sigma_1$-formula}, is a function of the form
    \[\inf_{x_1,...,x_n\in M(\ax)} \phi(x_1,...,x_n, y_1,...,y_n)\] where $\phi$ is a quantifier free formula which is bounded below in $\bar x$. (If we think of the value of $\phi$ as being its distance from being true, then $\inf_x \phi(x)$ is small when there exists some value that makes it small.) We write $\Sigma_1$ for the set of all existential formulas.

    A \textbf{first order formula} is function of the form:
    \[\inf_{x_1}\sup_{x_2}\inf_{x_3}\sup_{x_4}... \sup_{x_{2n}}\phi(\bar x, \bar y)\] where $\phi$ is quantifier free, and each subformula is appropriately bounded.

    A \textbf{sentence} is a formula with no free variables.
\end{dfn}

So, for instance, $\inf_f \tr\bigl((f^2-g)^*(f^2-g))$ is a one place $\Sigma_1$ formula. Note that you can canonically interpret a first order formula $\phi(x_1,...,x_n)$ in any structure $M(\ax)$ which contains $x_1,...,x_n$. In particular, a sentence gives some numerical invariant of an action.

\begin{dfn}
    For a first order formula
    \[\psi(\bar y)=\inf_{x_1}\sup_{x_2}\inf_{x_3}\sup_{x_4}... \sup_{x_{2n}}\phi(\bar x, \bar y)\] an action $\ax$, and $f_1,...,f_m\in M(\ax)$, we write
    \[\psi^{\ax}(\bar f)=\inf_{x_1\in M(\ax)}\sup_{x_2\in M(\ax)}\inf_{x_3\in M(\ax)}\sup_{x_4\in M(\ax)}... \sup_{x_{2n}\in M(\ax)}\phi(\bar x, \bar f).\]

    Sometimes it is convenient to write $M(\ax)\vDash \phi(x_1,...,x_n)\leq r$ to mean $\phi^{\ax}\leq r$.
\end{dfn}

For instance, (bear in mind $\Gamma=\ip{\gamma_1,...,\gamma_n}$) \[M(\ax)\vDash\inf_x \max(1-|x|, |x-\gamma_1\cdot x|,...,|x-\gamma_n\cdot x|, \ip{x,\bf{1}})=0\] exactly when $\ax$ admits non-trivial approximately invariant vectors.

The following proposition is useful, and the proof is a simple induction on the length of $\phi$.

\begin{lem}\label{lem:qfpreserve}
    If $f: M(\ax)\rightarrow M(\bx)$ is an equivariant isometric embedding of $*$-algebras, and $\phi$ is quantifier free, then $\phi^{\ax}(x_1,...,x_n)=\phi^b\bigl(f(x_1),...,f(x_n)\bigr)$
\end{lem}

Sentences in $M(\ax)$ detect a great deal about the action. For instance, since any pmp graph with degree at most $2$ is measurable $3$-colorable:
\begin{align*} \ax\mbox{ is free }\lra & (\forall \gamma\in \Gamma,\epsilon>0) (\exists A_1,A_2,A_3\subseteq X)\; A\cup B\cup C=X \;\& \; A_i\cap \gamma\cdot A_i=0 \\
\lra & (\forall \gamma)\; M(\ax)\vDash \inf_{f_1,f_2,f_3} \max\bigl(\bigl|(f_1+f_2+f_3)-\bf 1\bigr|, |f_i^2-f_i|, |(\gamma\cdot f_i)f_i|  \bigr)=0\end{align*} (It happens to be the case that the infimum above is always realized. That isn't usually the case.)

So, knowing the value of every sentence will tell you whether the action is free or not. On the other hand, ergodicity is not detected by the first order theory. Indeed, any two free pmp actions of $\Z$ have the same first order theory, but there are ergodic and nonergodic free $\Z$-actions. The problem is that the existence of almost invariant sets means there is no first-order way to express $\gamma\cdot f\not= f$ in the definition:

\[\ax\mbox{ is ergodic }\lra (\forall f)\; \bigl(f=\tr(f)\bf 1\mbox{ or }(\exists \gamma\in \Gamma) \; \gamma\cdot f\not= f\bigr).\]

In fact, one can view strong ergodicity as the weakest strengthening of ergodicity which is first-order expressible. We have a name for the ensemble of numerical invariants given by first order sentences:

\begin{dfn}
    Let $\s L$ be the set of first order sentences.  The \textbf{first order theory} of an action $\ax$ is 
    \[Th(\ax)=\ip{\phi^{\ax}: \phi\in \s L}\] The \textbf{$\Sigma_1$-theory} of $\ax$ is  \[Th_{\Sigma_1}(\ax)=\ip{\phi^{\ax}: \phi\in \Sigma_1}.\]
\end{dfn}

\textit{A priori}, the elementary theory of an action lives in an inseparable product space: the size of $\s L$ is $|\R|$ and a theory lives in $\R^{\s L}$. But, there is a countable subset of $\s L$ that suffices for all pratical purposes:
\begin{prop}[{\cite[Remark 3.11]{Survey}}] \label{prop:densesentences}
    There is a countable subset $\tilde {\s L}\subseteq \s L$ so that, for any $\epsilon>0$ and $\phi\in \s L$ there is $\tilde \phi\in \tilde {\s L}$ so that, for all actions $\ax$,
    \[|\phi^{\ax}-\tilde \phi^{\ax}|<\epsilon.\]
\end{prop} 

The $\Sigma_1$- and first order theories naturally give rise to equivalence relations and topologies on the corresponding quotients.

\begin{dfn}
   We say two actions, $\ax$ and $\bx$ are \textbf{elementary equivalent} if, for all first order sentences $\phi$, $\phi^{\ax}=\phi^\bx$. In this case we write
   \[\ax\equiv \bx\] Similarly, if $\ax$ and $\bx$ agree on all $\Sigma_1$-sentences, we say they are $\Sigma_1$-equivalent and write
   \[\ax \equiv_{\Sigma_1}\bx\]
   
   We say an embedding $\iota:M(\ax)\hookrightarrow M(\bx)$ is an \textbf{elementary embedding} if, for any $x_1,...,x_n\in M(\ax)$ and first order formula $\phi$
   \[\phi^{\ax}(x_1,...,x_n)=\phi^b\bigl(\iota(x_1),...,\iota(x_n)\bigr).\] If such an embedding exists, we write
   \[M(\ax)\preccurlyeq M(\bx)\]
\end{dfn}
The notation $M(\ax)\preccurlyeq M(\bx)$ is somewhat misleading (though, standard). This is a much stronger condition than $M(\ax)\equiv M(\bx)$. 

For example, \L o\'s's theorem (Theorem \ref{thm:los} below) says that the diagonal embedding, \[x\mapsto [\ip{ x:i\in\N}]_{\s U},\] of $M(\ax)$ into its ultrapower $M(\ax)^{\s U}$ is elementary.

\begin{dfn}
   Two actions, $\ax$ and $\bx$, are $\Sigma_1$-\textbf{equivalent} if they agree on all $\Sigma_1$-sentences. In this case, we write
   \[M(\ax)\equiv_{\Sigma_1} M(\bx).\] 

   A sequence of actions $\ip{\ax_i:i\in\N}$ is \textbf{elementary Cauchy} (or $\Sigma_1$ \textbf{Cauchy}) if for every first order (or existential) sentence $\phi$, $\phi^{\ax_i}$ converges. The sequence ($\Sigma_1$- or) \textbf{elementary converges} if there is some (standard) pmp action $\ax$ so that $\phi^{\ax_i}$ converges to $\phi^\ax$ for all (existential) $\phi$.
\end{dfn}

That is, a sequence of action elementary converge if their theories converge in the product topology. We'll see later that the space of elementary equivalence classes with the topology of elementary convergence is compact, and by Proposition \ref{prop:densesentences} it's separable.

The following two theorems are fundamental and can be found in the survey \cite{Survey}.
    
\begin{thm}[Continuous \L o\'s's theorem, {\cite[Thm 5.4]{Survey}}] \label{thm:los}
For any first order formula $\phi$,
\[\phi^{\lim_{\s U}\ax_i}=\lim_{\s U} \phi^{\ax_i}.\] In fact, for any $[x^1],...,[x^n]\in \lim_{\s U} M(\ax_i)$,
\[\bigl(\lim_{\s  U} M(\ax_i)\bigr)\vDash \phi\bigl([x^1],...,[x^n]\bigr)\leq\epsilon\; \lra \; (\exists A\in \s U) (\forall i\in A)\; M(\ax_i)\vDash \phi(x^1_i,...,x^n_i)\leq \epsilon.\] 
\end{thm}
It follows that ultraproducts are saturated in the following sense:
\begin{cor}[Saturation of ultraproducts] Let $T=\{\phi_i: i\in\N\}$ be a countable set of formulas so that any finite subset can be satisfied in $\lim_{\s U} M(\ax_i)$, i.e.
    \[(\forall i,\epsilon)(\exists c_1,...,c_n\in \lim_{\s U} M(\ax_i))\; \max_{j\leq i}\phi_j(c_1,...,c_n)<\epsilon\]
    then $T$ can be satistfied all at once, i.e.
    \[(\exists c_1,...,c_n\in \lim_{\s U} M(\ax_i))(\forall i)\; \phi_i(c_1,...,c_n)=0.\]
    In particular, if $\lim_{\s U}M(\ax_i)\vDash \inf_x \phi(x)=0$ then this infimum is realized.
\end{cor}
\begin{proof} This is a straightforward diagonalization: we can find a nested sequence of large sets where the $n^{th}$ set has a witness to the first $n$ formulas being small. We can then take the ultralimit of a sequence drawn from these witnesses. We'll handle the one variable case in detail. The general case is just a cumbersome notational change. 

Suppose that for each $i$, there is a sequence $c^i$ so that \[\lim_{\s U} M(\ax_k)\vDash \max_{j<i}\phi_j\bigl([c^i]\bigr)<1/i.\] For each $i$ we can find $A_i\in \s U$ with $A_i\subseteq [i,\infty)$ such that for all $j<i$ and $k\in A_i$, $\phi^{a_k}_j(c^i_k)<\frac{1}{i}.$ Let $B_i=\cap_{k=1}^i A_k$. Then 
    \[\bigcap_{i\in\N} B_i=\emptyset\mbox { and }\quad (\forall i\in \N)\; B_i\in \s U,\quad  B_i\supseteq B_{i+1}. \] So, we can define $c_k$ by $c_k=c^i_k$ if $k\in B_i\sm B_{i+1}$. Since the intersection of the $B$s vanishes, this defines $c_k$ for all $k\in B_1$, and we can define $c_k$ however we like on all other values of $k$.
    For any $j$, for all $k>j$ with $k\in B_k$, $\phi_j^{\ax_k}(c_k)=\phi_j^{\ax_k}(c^i_k)$ for some $i>k$ with $k\in A_i$. Thus, $\phi_j^{a_k}(c_k)<\frac{1}{k}.$ So by \L o\'s,'s theorem, for any $j$, $i\in\N$, 
    \[\phi_j^{\lim_{\s U} M(\ax_i)}([c])=\lim_{\s U} \phi_j(c_i)<1/i.\]
    
\end{proof}
The L\"owenheim--Skolem theorem for continuous model theory lets us extract standard pmp actions from ultraproducts.
\begin{thm}[Continuous L\"owenheim--Skolem theorem, {\cite[Thm 7.3]{Survey}}] \label{thm:lowenheimskolem}
    For any countable subset $A\subseteq (\lim_{\s U} M(\ax_i))$, there is a separable model $M(\ax)$ and an elementary embedding $\iota: M(\ax)\hookrightarrow (\lim_{\s U} M(\ax_i))$ with $A\subseteq \im(\iota)$.
\end{thm}

These theorems imply that elementary equivalence classes of separable structures are compact in the topology of elementary convergence.

\begin{cor}\label{thm:logiccompactness}
    Suppose that $\ip{\ax_i: i\in\N}$ is a sequence of actions. Then there is a subsequence $\ip{\ax_{f(i)}: i\in \N}$ and an action $\ax$ on a standard space so that $Th(\ax_{f(i)})$ converges to $Th(\ax)$. (And similarly with elementary convergence and theory replaced by $\Sigma_1$ convergence and theory).
\end{cor} 
It follows that any Cauchy sequence is convergent.

\begin{proof}
    By the L\"owenheim--Skolem theorem, we can find a standard action $\ax$ so that $M(\ax)\preccurlyeq \lim_{\s U} M(\ax_i)$. Then, for a sentence $\phi$, 
    \[\phi^{\ax}=\phi^{\lim_{\s U} \ax_i}=\lim_{\s U} \phi^{\ax_i}.\] Now we'll extract the desired subsequence by diagonalizing against a countable dense set of sentences. Let $\tilde {\s L}$ be the countable subset of first order sentences from Prop \ref{prop:densesentences}. Say $\phi_1,...,\phi_n$ are the first $n$ sentences in $\tilde{\s L}$. From the above calculation, the set of indices $i$ so that
    \[|\phi_1^{\ax}-\phi_1^{\ax_i}|,...,|\phi_n^{\ax}=\phi_n^{\ax_i}|<\frac{1}{n}\] is in $\s U$. So, certainly this set is nonempty, and we can let $f(n)$ be the least such index bigger than $n$. Then, for any sentence $\phi\in \tilde {\s L}$, $\phi^{\ax_{f(i)}}$ converges to $\phi^{\ax}$. And by the density of $\tilde {\s L}$ in $\s L$, this is true for all first order sentences $\phi$. 

    Lastly, note that elementary convergence implies $\Sigma_1$-convergence.
\end{proof}

We can also use these tools to replace any action by an elementary equivalent action where a given $\sup$ or $\inf$ is realized.

\begin{prop}
    For any pmp action $\ax$ and first order formula $\phi$, if \[\ax\vDash \inf_x \phi(x)=0,\] then there is an elementary equivalent action $\bx$ with some $f\in M(\bx)$ so that $\phi^{\bx}(f)=0$.
\end{prop}
\begin{proof}
    By saturation, there is some $f\in M(\ax)^{\s U}$ so that $M(\ax)^{\s U}\vDash \phi(f)=0$. By the L\"owenheim--Skolem theorem, we can find some pmp action $\bx$ and an elementary embedding $g: M(\bx)\rightarrow M(\ax)^{\s U}$ with $f\in \im(g)$. By elementarity, $\phi^{M(\bx)}(f)=\phi^{M(\ax)^{\s U}}(f)=0.$
\end{proof}

We will also need a fact from model theory with obvious connections to weak containment. The proof uses a continuous version of the diagram argument from classical model theory.

\begin{thm}\label{thm:embeddingthm}
    A structure $M(\ax)$ embeds into an ultrapower $M(\bx)^{\s U}$ if and only if for every $\Sigma_1$-sentence $\phi$, $\phi^b\leq \phi^{\ax}$.
\end{thm}
\begin{proof}[Proof sketch.]
For the left to right direction, suppose that $f$ is an embedding of $M(\ax)$ into $M(\bx)^{\s U}$. For any quantifier free $\phi$, by Lemma \ref{lem:qfpreserve}, if $x_0$ witnesses that $(\inf_x \phi(x))^{M(\ax)}$ is small, then $f(x_0)$ witnesses that $(\inf_x\phi(x))^{M(\bx)^{\s U}}$ is small. And, $M(\bx)$ and $M(\bx)^{\s U}$ are elementary equivalent, so 
\[\bigl(\inf_x \phi(x)\bigr)^{M(\ax)}\geq \bigl(\inf_x \phi(x)\bigr)^{M(\bx)^{\s U}}=\bigl(\inf_x \phi(x)\bigr)^{M(\bx)}.\]

For the other direction, let $\s A=\{\ax_i:i\in\N\}\subset M(\ax)$ be a countable dense substructure. It suffices to to give an isometric embedding of $\s A$ into $M(b)$. Let $\s L_{\s A}$ be the language of tracial {von Neumann} algebras equipped with an action expanded by a countable set of constants $\{c_i:i\in\N\}$. Define the atomic diagram of $\s A$ to be the following set of quantifier free sentences in $\s L_{\s A}$:
\[\Delta(\s A)=\{\phi(c_1,...,c_n): M(\ax)\vDash \phi(a_1,...,a_n)=0\}.\]
Then, we need to show that there is some assignment $c_i\mapsto a'_i\in M(\bx)^{\s U}$ which gives the correct value to $\Delta(\s A)$ (meaning $\phi^{\bx^{\s U}}(a'_1,...,a'_n)=0$ for each $\phi\in \Delta(\s A)$). Since the ultraproduct is saturated and $\Delta(\s A)$ is closed under taking maxima and minima, it suffices to do this for every sentence in $\Delta(\s A)$ individually. By our assumption, \[M(\bx)^{\s U} \vDash \inf_{x_1,...,x_n}\phi(x_1,...,x_n)=0\] for every $\phi\in \Delta(\s A)$. And, again by saturation, this infimum is realized in $M(\bx)^{\s U}$. 
\end{proof}
\subsection{Definitions from combinatorics}

Local-global convergence was first introduced in the context of sparse graph limits. Here we give a restricted definition which applies to free actions of locally finite groups. We give the more general definition later.

\begin{dfn}
    For a marked group $\Gamma$ with identity element $e$, and $r,k\in \N$, the corresponding local labelling space \emph{for free actions} is
    \[\bb L_f(r,k,\Gamma)=[k]^{B_r(e)}.\] (In this notation, $\Gamma$ indicates that we are working with labellings of $\Gamma$ actions, and $f$ means they are free. We will relax both of these conditions later.)
    
    Note that $\bb L_f(r,k,\Gamma)$ a finite set, so the space of probability measures $P_1\bigl(\bb L_f(r,k,\Gamma)\bigr)$ is compact under any reasonable metric and any two reasonable metrics will be equivalent. For concreteness, we equip this space with the total variation distance.
    \[d_{TV}(\mu,\nu)=\sum_{f\in \bb L_f(r,k,\Gamma)} \bigl|\mu(\{f\})-\nu(\{f\})\bigr|.\]
    For a free pmp action $\ax:\Gamma\curvearrowright (X,\mu)$, a measurable labelling $f:X\rightarrow [k]$, and $r\in \N$, the \textbf{local statistics} of $f$ to radius $r$ is the measure $\mu^r(f)$ on $\bb L_f(r,k,\Gamma)$ given by sampling a random radius $r$ neighborhood in $f$:
    \[\mu^r(f)(g):=\mu\bigl(\{x: (\forall \gamma\in B_r(e))\; f(\gamma\cdot x)=g(\gamma)\}\bigr).\]

    The set of $(r,k)$-local statistics of the action $\ax$ is the closure of the set of local statistics of $k$-labellings of $X$. That is,
    \[{\s L'_{r,k}}(\ax):={\{\mu^k(f): f\in [k]^X\mbox{ measurable}\}}\]
    \[\s L_{r,k}(\ax)=\overline{\s L'_{r,k}(\ax)}\]
\end{dfn}

Note that if $X$ is finite, then $\s L'_{rk}(\ax)$ is finite. So, this closure doesn't add any new points. Similar observations will hold for finite graphs in later sections.

The $(r,k)$-local statistics of an action detect many dynamical and combinatorial parameters of an action. For instance: 
\begin{dfn} If $\Gamma=\ip{\gamma_1,...,\gamma_n}$ is a marked group, and $\ax:\Gamma\curvearrowright (X,\mu)$ is a pmp action, the \textbf{independence ratio} of $\ax$ is the suprememum of the measures of independent sets in the Schreier graph of $\ax$. That is, for free actions,
\[\alpha_\mu(\ax):= \sup\bigl\{\mu(A): A\subseteq X,\mbox{ and for }i=1,...,n, (\gamma_i\cdot A)\cap A=\emptyset\bigr\}\]
(for non-free actions, we need to consider $(A\cap \gamma_i\cdot A)\setminus \fix(\gamma_i)$ to ignore self-loops).  
\end{dfn} This is detected by $\s L_{1,2}(\ax)$:
\begin{align*}
    \alpha_\mu(\ax)=&\sup\{\mu(A): A\subseteq X, (\gamma_i\cdot A)\cap A=\emptyset\}\\
                =&\sup\bigl\{\nu(\{f: f(e)=1\}): \nu\in \s L_{r,k}(\ax), \nu\bigl(\{f: (\exists i)\; f(e)=f(\gamma_i)=1\}\bigr)=0\bigr\}
\end{align*}
Similarly, we can compute the matching ratio of an action of $\Gamma$ as above from $\s L_{1n}(\ax)$ (where $n$ is the number of generators) by identifying a matching with the function that takes a vertex to the edge it's matched along.

More generally, the maximum density of a set on which a measurable labelling of the Schreier graph will satisfy a locally checkable constraint (in the sense of \cite{gridsSurvey}) is detected by $\s L_{r,k}$, where $k$ is the label set and $r$ is the radius needed to check the local constraint. 

An action $\ax$ is said to local-global contain another action $\bx$ if the local statistics of any labelling of $\bx$ can be well approximated by the statistics of an $\ax$ action. 
\begin{dfn}
    For two free pmp actions $\ax,\bx$ of $\Gamma$, $\ax$ \textbf{local-global contains} $\bx$ if for each $r,k$ the set of $(r,k)$-local statistics of $\ax$ contain those of $\bx$. In symbols:
    \[b\preccurlyeq_{lg} \ax:\lra (\forall r,k)\; \s L_{rk}(\bx)\subseteq \s L_{rk}(\ax).\] (Recall that the definition of $\s L_{rk}(\ax)$ involves taking a closure).
\end{dfn}
So, for instance, any independent set in $\bx$ of measure $\alpha$ gives rise to independent sets in $\ax$ with measure close to $\alpha$.

\begin{dfn}
    We say two actions are \textbf{local-global equivalent} if they have the same local statistics. That is
    \[\ax\equiv_{lg} \bx:\lra a\preccurlyeq_{lg} b\mbox{ and } b\preccurlyeq_{lg} a.\]
    A sequence of actions $\ip{\ax_i:i\in\N}$ is \textbf{local-global Cauchy} if, for every $r,k\in \N$, the sequence $\ip{\s L_{rk}(\ax_i):i\in\N}$ converges in the Hausdorff metric. And, the sequence \textbf{local-global converges} to $\ax$ if $\s L_{rk}(\ax_i)$ converges to $\s L_{rk}(\ax)$ for each $r,k$.
\end{dfn}
Note that since $P_1\bigl(\bb L_f(r,k,\Gamma)\bigr)$ is compact, so is its Hausdorff space. Thus the space of local-global equivalence classes of actions under local-global convergence is compact provided every Cauchy sequence converges. This is indeed the case, as we shall see in Theorem \ref{thm:axnconvergence}.
\subsection{Equivalences for free actions}

The notions from the preceding subsections turn out to all coincide for free actions. The main point is that the local statistics of a labelling are expressed by an existential formula. To give a clean proof of this, we'll use a handy fact about continuous first order definability.

\begin{lem}[{\cite[Theorem 9.17]{Survey}}]\label{lem:definable}
Say $D\subseteq M(\ax)^n$ is definable if there is a first order formula $\phi(\bar x)$ and increasing, uniformly continuous functions $\alpha, \beta:\R\rightarrow \R$ so that $\alpha(0)=\beta(0)=0$ and for any $\bar f\in M(\ax)^n$
\begin{enumerate}
    \item $\alpha(\phi(\bar f))\leq d(\bar f, D)\leq \beta(\phi(\bar f))$
    \item $\phi(\bar f)=0$ iff $x\in D$.
\end{enumerate} If $\phi$ is definable, then for any first order formula $\psi(\bar x, \bar y)$, \[\inf_{\bar x\in D} \psi(\bar x,\bar y)\] is also first order. (likewise for $\sup_{x\in D}\phi(x).$)

Further, if $D$ is definable by a $\Sigma_1$-formula and $\phi$ is $\Sigma_1$, then $\inf_{x\in D} \phi(x)$ is $\Sigma_1.$
\end{lem}

Roughly this is because we can find a re-weighting factor $\delta$ so that 
\[\inf_{\bar x\in D} \psi(\bar x,\bar y)=\inf_{\bar x\in M(\ax)^n} \psi(\bar x,\bar y)+\delta(d(\bar x, D)),\] and use points $(1,2)$ above to swap out $\phi$ for $d(\bar x, D)$.

\begin{prop}
    For any $\mu\in P_1\bigl(\bb L_f(r,k,\Gamma)\bigr)$, 
    \[\phi_{\mu}(f):= \inf_{g:X\rightarrow [k]} |f-g|+d_{TV}(\mu^{r}(g),\mu)\] is given by a $\Sigma_1$ formula.
\end{prop}
\begin{proof}
    The main idea is that the set of characteristic functions is definable in the sense of the previous proposition: the distance from $f$ to the nearest characteristic function is uniformly bounded above and below by $\max(d(f, f^2), d(f,f^*)).$ So, we can quantify over $[k]$-value functions by instead quantifying over sums of $k$ characteristic function.

    For $g: X\rightarrow [k]$, let $e_i=\bf 1_{g\inv[i]}$, so $g=\sum_{i=1}^k ie_i$. Then,
    \begin{align*} d_{TV}(\mu^r(g),\mu)=& \sum_{h\in k^{B_r(e)}} |\mu(h)-\mu^{r}(g)(h)| \\                                              =& \sum_{h\in k^{B_r(e)}} \bigl|\mu(h)-\tr\bigl(\prod_{\gamma\in B_r(e)}\gamma\inv\cdot e_{h(\gamma)}\bigr) \bigr| \end{align*}
Abbreviate this last sum by $\psi(e_1,...,e_k)$. Let $D\subseteq M(\ax)^k$ be the set of $k$-tuples of disjoint characteristic functions: 
\[D=\{(e_1,...,e_k): (\forall i,j)\, e_i^*=e_i, e_ie_j=\delta_{ij} e_i\}.\]
Since this is definable by an existential formula, the above lemma tells us,
\begin{align*} \phi_{\mu}(f)=&\inf_{e_1,...,e_k\in P} d(f, \sum_{i=1}^k ie_i)+\psi(e_1,...,e_k)\in \Sigma_1 .\end{align*}
   
\end{proof}
\begin{cor}
    For all $r,k$, $\s L_{rk}(\lim_{\s U}\ax_i)=\lim_{\s U} \s L_{rk}(\ax_i)$, where the latter limit is taken in the Hausdorff metric.
\end{cor}
\begin{proof}
    The following characterization of ultralimits in the Hausdorff metric is a straightforward calculation: for $\ip{E_i:i\in\N}$ a sequence in $K(X)$ and $x\in X$,
    \[x\in \lim_{\s U} E_i \lra (\exists x_i \in E_i)\;\lim_{\s U} d(x,x_i)=0.\]
    Using $\phi_\mu$ from above,
     \[\lim_{\s U}d_{TV}\bigl(\mu,\s L_{r,k}(\ax_i)\bigr)=\lim_{\s U} \inf_{f} \psi^{\ax_i}_\mu(f)=\inf_f \psi^{\lim_{\s U} \ax_i}_\mu(f)=d_{TV} \bigl( \mu, \s L_{rk}(\lim_{\s U} \ax_i)\bigr)\]
    So, $\mu\in \lim_{\s U}\s L_{rk}(\ax_i)$ if and only if the left hand side above is $0$ if and only if the right hand side is $0$ if and only if $\mu\in \s L_{rk}(\lim_{\s U}\ax_i)$.
\end{proof}

We can now prove the equivalences:

\begin{thm} \label{thm:freeequivalents}
    For free pmp actions $\ax,\bx$ of a group $\Gamma$, the following are equivalent:
    \begin{enumerate}
        \item $\bx\preccurlyeq_w \ax$
        \item $\bx\preccurlyeq_{lg} \ax$
        \item $(\forall \phi\in \Sigma_1)\; \phi^{\ax}\leq \phi^\bx$.
    \end{enumerate}
\end{thm}
\begin{proof}
    The equivalence of $(3)$ and $(1)$ is a restatement of Theorem \ref{thm:embeddingthm}. The implication from $(3)$ to $(2)$ is essentially the above proposition. In more detail, suppose $\mu$ is some measure in $\s L_{r,k}(\ax)$ and let $\phi_\mu$ be as above. Then, $\mu\in \s L_{r,k}(\ax)$ means $\bigl(\inf_f \phi_\mu(f)\bigr)^{\ax}=0$, so $\bigl(\inf_f \phi_{\mu}(f)\bigr)^{b}=0$. Thus $\mu\in \s L_{r,k}(\bx).$
    
    To get $(2)$ implies $(1)$, we need to show that every $\Sigma_1$ fact about $\bx$ is implied by its local statistics. To do this, we show that these local statistics are enough to find a copy of a dense countable substructure as in the proof of Lemma \ref{thm:embeddingthm}. Let $\s B_0$ be some countable basis for $X$, and let $\s B=\{b_i:i\in\N\}$ be the (countable, not closed) $\Q[i]$-subalgebra of $L^2(Y,\nu)$ spanned by the characteristic functions of sets in $\s B_0$.

    We want to find $\{a_i:i\in\N\}\subseteq M(\ax)^{\s U}$ so that $(b_i\mapsto a_i)$ is an isomorphism. By saturation, it suffices to do this up to $\epsilon$ for every finite set of $a_i$'s. This boils down to checking that for any list of characteristic functions $e_1,...,e_n\in M(\bx)$ and group elements $\gamma_1,...,\gamma_n$, there are $f_1,...,f_n\in M(\ax)$ with 
    \[\left|\tr\biggl(\prod_{i=1}^n\prod_{j=1}^n \gamma_j\cdot f_i \biggr) -\tr\biggl(\prod_{i=1}^n\prod_{j=1}^n  \gamma_j\cdot e_i \biggr)\right|<\epsilon. \] These traces are all detected by the local statistics of the labelling $f(x)=\bigl(e_1(x),...,e_n(x)\bigr)$. So local-global containment says we can satisfy all these equations.
\end{proof}
As a corollary, we can boost $\Sigma_1$-equivalence of free actions to elementary equivalence.

\begin{cor}\label{cor:sigma1implieselementary}
    For any free pmp actions $\ax,\bx$, if $\ax\equiv_{\Sigma_1}\bx$, then $\ax\equiv\bx.$
\end{cor}
\begin{proof}We may assume $\ax$ and $\bx$ are actions on the same space since the $\Sigma_1$ theory determines the number and weight of the atoms in the underlying measure space.

    Suppose $\ax\equiv_{\Sigma_1}\bx$. Then, by the theorem, $\ax$ is weakly equivalent to $\bx$. It follows that $\ax$ can be uniformly approximated by conjugates of $\bx$, and vice-versa \cite{BurtonKechrisSurvey}. Induction on the number of quantifiers shows that first order sentences are continuous with respect to the uniform topology on actions.
\end{proof} This corollary is also true for non-free actions, but we need to show that $\Sigma_1$-equivalence still implies weak equivalence for free actions. We will do that in the next subsection.

 We also get equivalence of the relevant topologies in the case of free actions:
\begin{thm}\label{thm:axnconvergence}
    For a sequence of free pmp actions $\ax_i:\Gamma\curvearrowright X_i$, the following are equivalent
    \begin{enumerate}
        \item $\ip{\ax_i:i\in\N}$ $\Sigma_1$-converges to $\ax$
        \item $\ip{\ax_i:i\in\N}$ elementary converges to $\ax$
        \item $\ip{\ax_i:i\in\N}$ local-global converges to $\ax$.
    \end{enumerate}
\end{thm}
\begin{proof} Clearly elementary convergence implies $\Sigma_1$ convergence. 

    If $\ax_i$ $\Sigma_1$-converges to $\ax$, then for every $r,k\in\N$ and $\mu\in P_1\bigl(\bb L_f(r,k,\Gamma)\bigr)$, $\bigl(\inf_{f}\phi_\mu(f)\bigr)^{\ax_i}$ converges to $\bigl(\inf_f \phi_\mu(f)\bigr)^{\ax}$. So, in particular $d_{TV}\bigl(\mu, \s L_{rk}(\ax_i)\bigr)$ converges to $d_{TV}\bigl(\mu, \s L_{rk}(\ax)\bigr)$. Thus $\s L_{rk}(\ax_i)$ converges to $\s L_{rk}(\ax)$, and $\ax_i$ local-global converges to $\ax$.

    Lastly, suppose $\ax_i$ local-global converges to $\ax$. Since the space of elementary theories is compact, it suffices to show that any elementary convergent subsequence of $\ip{\ax_i:i\in\N}$ converges to $\ax$. By the above, the limit of any subsequence will be local-global equivalent to $\ax$, so will be $\Sigma_1$-equivalent to $\ax$, as desired.
\end{proof}
\begin{cor}
    The topology of local-global convergence for actions is compact.
\end{cor}
\begin{proof}
    Local-global convergence agrees with elementary convergence, which is compact by Corollary \ref{thm:logiccompactness}.
\end{proof}

\subsection{Subtleties for non-free actions}
There are essentially two ways to generalize the forgoing definitions beyond free actions. According to one approach-- more typical in dynamics --the definitions essentially remain unchanged (you will note none of them really require free-ness). And the equivalences are also unchanged. For instance, if $\bx_0$ is the trivial action of $\Gamma$ on a one-point space, then every action contains $\bx_0$, $M(\bx_0)$ is the algebra of constant functions, and the local statistics of $\bx_0$ are similarly boring but still measure labellings of $B_r(e)$.

The other approach more closely aligns with the original definition of local-global convergence for graphs. The local statistics of $f$ record not only the pull-back of  $f$ to $B_r(e)$ at each point $x$, but also the isomorphism type of $B_r(x)$ in the Schreier graph. Then, almost no other action will weakly contain the trivial action since no nontrivial action has the trivial graph showing up as a ball of radius $1$ with probability $1$. In the language of weak equivalence, this corresponds to asking that factor maps preserve stabilizers. In terms of model theory, this corresponds to adding constants to our language for the characteristic functions
\[\bf 1_{\fix(\gamma)}(x)= \left\{\begin{array}{cc}
    1 & \gamma\cdot x=x  \\
    0 & else
\end{array}\right..\]

We will leave the definition of weak containment untouched (as this is somewhat more standard in ergodic theory), and we will modify the definition of local-global convergence to agree with local-global convergence of the Schreier graph of the action. We'll have it both ways for model theory-- we will speak of existential theories in two different languages.

The subtleties outlined above are only an issue for containment, though. The characteristic function $\bf 1_{\fix(\gamma)}$ is always definable in the sense of Lemma \ref{lem:definable}, and any $\Sigma_1$ sentence in this expanded language is first order in the original language. As with free actions, $\Sigma_1$ equivalence implies elementary equivalence, which then implies equivalence in the expanded language. So, all our notions of equivalence and convergence align.

We want the local statistics of a labelling of a $\Gamma$-Schreier graph to be supported on finite rooted labelled graphs equipped with a labelling of the edges by generators of $\Gamma$. In the case free actions, the finite graph was always $B_r(e)$ and the edge-labelling was implicit-- $x(\gamma\cdot x,x)=\gamma$. It will be convenient to code the edge-labelled graphs appearing as neighborhoods in $\Gamma$-Schreier graphs as quotients of $B_r(e)$ and again leave the edge-labelling implicit.

\begin{dfn}
   A \textbf{quotient $r$-ball} for $\Gamma$ is an equivalence relation on $B_r(e)$. Given an action $\ax:\Gamma\curvearrowright X$ on a set $X$ and $x\in X$, the quotient $r$-ball of $x$ is the equivalence relation $B_r(x,\ax)$, with classes denoted $[\gamma]_x$, given by
   \[[\gamma]_x=[\delta]_x:\lra \gamma\cdot x=\delta\cdot x.\]
   We write $\bb G(r,\Gamma)$ for the set of quotient $r$-balls in $\Gamma$. (Not all possible quotient $r$-balls arise as $B(x,a)$ for some action $\ax$, but no matter).

   For $r,k\in \N$, the corresponding label space \emph{for general actions} is 
   \[\bb L(r,k,\Gamma)=\{(E, f): E\in \bb G(r, \Gamma), f: B_r(e)/E\rightarrow [k]\}. \] (Again, $\Gamma$ in this notation tells us we're studying $\Gamma$ actions. This will be relaxed to a simple degree bound later).

   Given an action $\ax:\Gamma\curvearrowright(X,\mu)$ and $f:X\rightarrow [k]$, the $r$-local statistics of $f$ is the measure $\mu^r\in P_1\bigl(\bb L(r,k,\Gamma)\bigr)$ on labellings of $r$-balls in Schreier graphs of quotients of $\Gamma$ given by
   \[\mu^r(f)(E,h)=\mu(\{x: B_r(x,\ax)=E, f(\gamma\cdot x)=h([\gamma]_E)\}).\]
\end{dfn}

Once again, the density of solutions to LCLs are detected by the local statistics of general actions. But so are parameters like the girth of the Schreier graph. The $(r,k)$-local statistics and local-global containment, equivalence, and convergence are defined exactly the same for non-free actions as for free actions using the definition of $\mu^r(k)$ above.

\begin{dfn} For $\ax$ a pmp action of $\Gamma$ (not necessarily free), \[\s L_{rk}'(\ax)=\{\mu^r(f): f:X\rightarrow [k]\mbox{ measurable}\}, \;\&\quad \s L_{rk}(\ax)=\overline{\s L_{rk}'(\ax)}.\]

We say $\ax$ is local-global contained in $\bx$ if $\s L_{rk}(\ax)\subseteq \s L_{rk}(\bx)$ for all $r,k$. In this case we write $\ax\preccurlyeq_{lg} \bx$. We say $\ax$ is local-global equivalent to $\bx$ if $\s L_{rk}(\ax)=\s L_{rk}(\bx)$ for all $r,k$. And, we say $\ip{\ax_i:i\in\N}$ local-global converges to $\ax$ if $\ip{\s L_{rk}(\ax_i):i\in\N}$ converges to $\s L_{rk}(\ax)$ for all $r,k$.

\end{dfn}

To capture about the isomorphism type of the Schreier graph in the model theoretic picture, we include new constants in our language for fixed point sets.

\begin{dfn}
    For a pmp action $\ax:\Gamma\curvearrowright (X,\mu)$, $M^+(\ax)$ is $M(\ax)$ equipped with the constants $\bf 1_{\fix(\gamma)}$ for $\gamma\in \Gamma$.
    Ultraproducts of $M^+(\ax)$ are defined by taking the ultralimits of each $\bf 1_{\fix(\gamma)}$.
    The language $\s L^+$ is defined analogously to $\s L$ in definition \ref{dfn: language}, with terms and formulas defined inductively but allowing the constants $\bf 1_{\fix(\gamma)}$ as terms.

    Say that $\ax$ is $\Sigma_1^+$-equivalent to $\bx$ if $\phi^{\ax}=\phi^\bx$ for all $\Sigma_1$ sentences in $\s L^+$. In this case we write \[\ax\equiv^+_{\Sigma_1}\bx.\] We define $\Sigma_1^+$-convergence similarly.
\end{dfn}

The following is a simple adaptation of Theorem \ref{thm:freeequivalents}.

\begin{thm}\label{thm:nonfreeequivalents}
    The following are equivalent for any (not necessarily free) pmp actions of $\Gamma$, $\ax,\bx$: 
    \begin{enumerate}
        \item $\bx\preccurlyeq_w \ax$ and the factor map from $\ax^{\s U}$ preserves stabilizers (i.e.~for all $x,\gamma$, $\gamma\cdot x=x$ iff $\gamma\cdot f(x)=f(x)$).
        \item $(\forall \phi\in \Sigma^+_1)\; \phi^{M^+(\ax)}\leq \phi^{M^+(\bx)}$
        \item $\bx\preccurlyeq_{lg}\ax$.
    \end{enumerate}
    And, as before, $\ax \preccurlyeq_w \bx$ iff $(\forall \phi\in \Sigma_1) \;\phi^{\ax}\leq \phi^{\bx}$.
\end{thm}
\begin{proof}[Proof sketch]
    If $\ax^{\s U}$ factors onto $\bx$ via a stabilizer preserving map, then $M^+(\bx)$ embeds into $M^+(\ax)^{\s U}$, so any witness to a $\Sigma_1$ sentence in $M^+(\bx)$ gives a witness in $M^+(\ax)^{\s U}$. And, $M^+(\ax)$ and $M^+(\ax)^{\s U}$ are elementary equivalent.

    We can test the distribution of quotient $r$-balls in an action using the constants $\bf 1_{\fix(\gamma)}$. For instance, the measure of points where $B_r(x,\ax)\cong B_r(e)$ is
    \[\tr\bigl(\prod_{\gamma,\delta\in B_r(e)} (1-\bf 1_{\fix(\gamma\delta\inv)}) \bigr).\] We can similarly detect the statistics of labelled quotient $r$-balls in $k$-valued labellings. So, if the $\Sigma_1$ theory of $\bx$ dominates that of $\ax$, then any measure in the $(r,k)$-local statistics of $\bx$ must also be in the $(r,k)$-local statistics of $\ax$.

    If $\ax$ local-global contains $\bx$, then we can simultaneously simulate any step function in $\bx$ and any set of fixed points in $\bx$ with a function and set of fixed points in $\ax$ up to any radius $r$ and any tolerance $\epsilon$. This means that a countable dense substructure of step functions in $M^+(\bx)$ embeds into the ultrapower $M^+(\ax)^{\s U}$.
\end{proof}

And, exactly as before, we can boost $\Sigma_1$ equivalence to elementary equivalence (that's $\equiv_{\Sigma_1}$ and $\equiv$, not $\equiv_{\Sigma_1}^+$ and $\equiv^+$). We can also show that $\bf 1_{\fix(\gamma)}$ definable in $M(\ax)$, so the elementary theory of $M(\ax)$ already detects this information. These equivalence notions then all collapse.

\begin{prop}
    For any pmp actions $\ax,\bx$,
    \[\ax\equiv \bx \lra\;\ax\equiv_{\Sigma_1} b\;\lra\; a\equiv_{\Sigma_1}^+ b.\]

    Further, the $\Sigma_1$ theory of a sequence of actions converges iff the $\Sigma_1^+$ theory converges iff the elementary theory converges.
\end{prop}
\begin{proof}
    The convergence statements follow from the equivalence statements and compactness. Of course elementary equivalence and $\Sigma_1^+$ equivalence both imply $\Sigma_1$ equivalence. To show that elementary equivalence implies $\Sigma_1^+$ equivalence, it suffices to show that the fixed point projections, $\bf 1_{\fix(\gamma)}$, are all definable (in the sense that $d(f, \bf f_{\fix(\gamma)})$ is given by a first order formula). Indeed,

    \[d(f, \bf 1_{\fix(\gamma)})=\inf\{d(f,e)+d(e,\bf 1_{\fix(\gamma)}): e=e^2=e^*\},\]for any characteristic function $e$,
    \begin{align*} d(e, \bf 1_{\fix(\gamma)})^2=&\tr((e-\bf 1_{\fix(\gamma)})(e-\bf 1_{\fix(\gamma)})^*) \\ = & \tr(e+\bf 1_{\fix(\gamma)}-e\bf 1_{\fix(\gamma)})\\= & \tr(e)+\mu(\fix(\gamma))-\tr(e\bf 1_{\fix(\gamma)}),\end{align*} and by measurable coloring arguments,
    \[\mu(\fix(\gamma))=\inf\{1-\tr(e_1+e_2+e_3): e_i^2=e_i^*=e_i, e_ie_j=e_i(\gamma\cdot e_i)=0\}\] and
    \[\tr(e\bf 1_{\fix(\gamma)})= \inf\{\tr(e(1-e_1-e_2-e_3)): e_i^2=e_i^*=e_i, e_ie_j=e_i(\gamma\cdot e_i)=0\}.\]

    Lastly, $\Sigma_1$-equivalence implies weak equivalence by the previous theorem, and weak equivalence implies elementary equivalence as in Corollary \ref{cor:sigma1implieselementary}
\end{proof}

Combining the previous two theorems, we see that $\equiv_{\Sigma_1}$ and $\Sigma_1$-convergence are the same as $\equiv_{lg}$ and local-global convergence. Also, the previous theorem means that any parameter which is expressible in by a first order sentence, such as spectral radius, is continuous with respect to local-global convergence.

\subsection{Extensions to pmp graphs}

We can extend all the forgoing to pmp graphs by using the fact that all pmp graphs with degree bounded by some fixed $d$ are all induced by actions of a single finitely generated group.

\begin{dfn}
    A \textbf{pmp graph} with \textbf{invariant measure }$\mu$ is a graph $G=(V, E)$ where $(V,\mu)$ is a standard probability space and $E=\bigcup_i \iota_i$ for some list of measure preserving involutions $\ip{\iota_i: i\in\N}$. 
\end{dfn}

Note that every finite graph is a pmp graph with normalized counting measure. The basic theorems about pmp graphs can be found in \cite{KM}. For us, pmp graphs will arise as local-global limits of finite graphs and as a tool to study $\aut(G)$-fiid processes.

\begin{prop}\cite{KM}
    If $G$ is a pmp graph so that each vertex has degree at most $n$, then $G$ is the Schreier graph of some (not necessarily free) pmp action of $C_2^{*(2n+1)}$.
\end{prop}

The definition of local-global convergence for pmp graphs is due to Bollobas--Riordan in the finite case and Hatami--Lovasz--Szegedi in general \cite{BR}\cite{HLS}.

\begin{dfn}
    Let $\bb L(r, k, d)$ be the set of isomorphism classes of finite rooted graphs of degree at most $d$ with vertex and edge $[k]$-labellings (isomorphism as rooted labelled graphs). We denote elements of $\bb L(r,k,d)$ by $[(H,o,g)]$ where $H$ is a graph, $o$ is a root, and $g$ is labelling.

    For a pmp graph $\s G$ on a probability space $(X,\mu)$, and $f:X\cup  E\rightarrow [k]$ measurable\footnote{Here we give $E$ the measure defined by $\tilde\mu(A)=\int_x |A_x|d\mu=\int_x |A^x|d\mu$ for $A\subseteq E$ Borel.}, $\mu^r(f)$ is the probability measure on $\bb L(r,k,d)$ given by
    \[\mu^r(f)\bigl([H,o,g]\bigr)=\mu\bigl(\{x: (H,o,g)\cong(B^{\s G}_r(x), x, f\res (B^{\s G}_r(x)))\}\bigr).\]
\end{dfn}
Once again, the $(r,k)$-local statistics and local-global containment, equivalence, and convergence are defined exactly the same for actions, but using the definition of $\mu^r(k)$ above. We needn't include edge labellings in the definition as they can be coded in the vertex labellings, but they are convenient. See \cite{HLS}.

The following definition and proposition connect local-global convergence for graphs and local-global convergence for actions.

\begin{dfn}
    A \textbf{marking} of a pmp graph $G$ with degree bounded by $n$ is an action $C_2^{*(2n+1)}$ which induces $G$ as its Schreier graph.
\end{dfn}

Note that a given graph may have markings which are very different from the point of view of the group. For instance, $C_2^{*6}$ can act so that the first $3$ generators act freely and the last three act trivially. The associated Schreier graph has (at least) 2 markings, the original one and one where the role of different generators are swapped.

We would like to say $G$ local-global contains $H$ if and only if some marking of $G$ local-global contains a marking of $H$. Certainly, if some marking of $G$ contains a marking of $H$, then $G$ contains $H$, but the converse is a little subtler. We may have to swap out for equivalent graphs.

\begin{prop} \label{prop:localglobalequivalents}
    If $\ax$ local-global contains $\bx$, then $\sch(\ax)$ local-global contains $\sch(\bx).$ Further, if $\ip{\ax_i:i\in \N}$ local-global converges to $\ax$, then $\sch(\ax_i)$ local-global converges to $\sch(\ax)$.
\end{prop}
\begin{proof}
    The only complication is that our definition of local-global convergence for graphs includes edge labellings. But in the presence of an action, there is a canonical way of coding edge labellings\footnote{The original definition of local-global equivalence of graphs does not include edge labellings, but the authors there using a similar coding idea to draw conclusions about parameters like the matching ratio}.

    Recall $E$ is the marked generating set for $\Gamma$. Let $\sch(\bx)=(X,F)$ and $\sch(\ax)=(Y,G)$. Given an edge-labelling $f: F\rightarrow [k]$, define a vertex labelling
    \[c(f): X\rightarrow [k]^{|E|}\]
    \[c(f)(x)=\bigl(f(x,\gamma_1\cdot_a x),...,f(x,\gamma_n\cdot_a x)\bigr).\]
    So, we have a injective function $c: [k]^F\rightarrow \bigl([k]^{|E|}\bigr)^X$ with inverse $c\inv$ defined on $\im(c)$.
    
    Note that, fixing a bijection between $[k^{|E|}]$ and $[k]^{|E|}$, for any measure $\mu$ on vertex and edge labelled radius $r$ graph
    \[\phi_\mu(f)=d_{TV}\bigl(\mu^{r}(c\inv(f)),\mu\bigr) \] is given by a $\Sigma_1$ sentence in $M^+(\ax)$. 

    So, if $\ax$ local-global contains $\bx$, then for any edge-and-vertex labelling $\mu\in \s L_{rk}(H)$ there is $f\in M^+(\bx)$ with $\phi^{b}_\mu(f)<\epsilon$, and so there is $f'\in M^+(\ax)$ with $\phi^{\ax}_\mu(f)<\epsilon$ and $\mu\in \s L_{rk}(\ax)$.

    Further, if $\ax_i$ is a local-global convergent sequence of actions, then $\inf_{f\in M^+(\ax_i)} \phi^{\ax_i}_{\mu}(f)$ converges for each $\mu$, so $\sch(\ax_i)$ local-global converges.
\end{proof}

This coding can all be done with higher-arity labellings as well, though we won't need this in what follows. As a partial converse:

\begin{prop}
    For pmp graphs $G$ and $H$, $H\preccurlyeq_{lg} G$ iff, for any marking $\bx$ of $H$, there an action $\ax$ so that $\sch(\ax)\equiv_{lg} G$ and $\bx \preccurlyeq_{lg} \ax$.
\end{prop}
\begin{proof}
    Say $H$ has vertex set $Y$ and $G$ has vertex set $X$, and fix $H$ and a marking $\bx:\Gamma\curvearrowright Y$ of $H$. The previous theorem implies that if $G$ is local-global equivalent to the Schreier graph of some action that weakly contains $\bx$, then $G$ local-global contains $H$.
    
    For the converse, the basic idea is that a marking is coded by a certain kind of edge labelling, and local-global containment lets us approximate the edge labelling associated to $\bx$ on $G$. Then, we can patch together these approximations in an ultraproduct of (some marking of) $G$.

    The diagram that codes an action $\ax:\Gamma\curvearrowright X$ is defined as follows. For an edge $(x,y)$ in $\sch(\ax)$, let 
    \[d_{\ax}(x,y)=\{\gamma\in E: \gamma\cdot x=y\}.\] (So , this labelling takes values in $\s P(E)$.)
    
    I claim that $\bx\preccurlyeq_{lg}\ax$ if and only if, for every $g: Y\rightarrow [k]$, $r\in \N$, and $\epsilon>0$, we can find $\tilde g: X\rightarrow [k]$ so that
    \[d_{TV}\bigl(\mu^r(g,d_{\bx}),\mu^r(\tilde g, d_{\ax})\bigr)<\epsilon.\] Here, by $\mu^r(g,d_{\bx})$ we mean the joint $r$-local statistics of $g$ and $d_\bx$.  This is because, for a labelled quotient $r$-ball $(E,h)$,
    \begin{align*}\m^r(g)(E,h)&=\bb P\bigl((\forall \gamma,\delta\in B_{r}(e))\;\delta\cdot x=\gamma\cdot x \leftrightarrow E(\delta,\gamma)\wedge g(\gamma\cdot x)=h(\gamma)\bigr)\\
    &= \mu^r\bigl([B_r(e)/E, [e]_E, (h,d_{st})] \bigr)
    \end{align*} where $d_{st}$ is the standard diagram given by $d_{st}([x],[y])=\{\gamma: [\gamma\cdot x]=[y]\}$.
 
    Fix any marking $\ax_0$ of $G$. Since $H\preccurlyeq_{lg} G$, for any $\epsilon>0, r\in\N,$ and $g: Y\rightarrow [k]$, we can find $d,\tilde g$ so that
    \[d_{TV}\bigl(\mu^r(g, d_{\bx}),\mu^r(\tilde g, d)\bigr)<\epsilon.\] By coding edge-labellings as above, we can use saturation to find a single edge labelling $d\in M(\ax_0)^{\s U}$ which works for all $g, r,$ and $ \epsilon$ simultaneously. (Or at least for all $g$ in a countable dense set). By Theorem \ref{thm:lowenheimskolem}, there is some $\ax_1:\Gamma\curvearrowright Z$ and an elementary embedding $e: M(\ax_1)\preccurlyeq M(\ax_0)^{\s U}$ so that $d\in\im(e)$. We have, $\ax_1\equiv \ax_0$, so by the first direction of $\sch(\ax_1)\equiv_{lg} \sch(\ax_0)=G$.
    
    Define an action $\ax:\Gamma\curvearrowright Z$ by setting $\gamma\cdot_{\ax} x=y$ if and only if $\gamma\in d(x,y)$. Then $\ax$ local-global contains $\bx$ by construction and $\sch(\ax)=\sch(\ax_1)\equiv_{lg} G$.
\end{proof}

Unfortunately, we do need to swap out $G$ in some cases. For instance, all graphs which are locally isomorphic to $\Z$ are local-global equivalent, but some of these graphs don't come from $\Z$ actions while others do. 

From the compactness, saturation, and continuity theorems for actions, we can conclude analogous theorems for graphs.

\begin{prop}
    If $\ip{G_i:i\in\N}$ is a sequence of pmp graphs all of bounded degree, and there is some sequence of markings $\ip{\ax_i:i\in\N}$ which elementary converges, then $\ip{G_i:i\in\N}$ local-global converges.
\end{prop}
\begin{proof}
    This is just the proposition above and Theorem \ref{thm:nonfreeequivalents} on containment for nonfree actions.
\end{proof}
\begin{cor}[Hatami--Lovasz--Szegedi]
    If $\ip{G_i:i\in\N}$ is a local-global Cauchy sequence of pmp graphs then $\ip{G_i:i\in\N}$ local-global converges.
\end{cor}
\begin{proof}
    It suffices to find a limiting class for some subsequence. Let $\ip{\ax_i:i\in\N}$ be a sequence of markings of $G_i$. By model theoretic compactness, there is an elementary convergent subsequence of $\ip{\ax_i:i\in\N}$. By the above, the Schreier graph of this limit is a local-global limit for $\ip{G_i:i\in\N}$.
\end{proof}
This proof is rather different than the original proof in \cite{HLS} (which was stated only for sequences of finite graphs). And, it plays out a recurring theme: though there is not an obvious or coherent way to choose markings for pmp graphs, compactness totally obviates the issue.

\begin{prop}\label{prop:replace}
For any bounded degree graph $G$ and any $\mu\in \s L_{rk}(G)$ there is some $H$ local-global equivalent to $G$ so that $\mu=\mu^r(f)$ for a measurable labelling $f$ of $H$.
\end{prop}

\begin{proof}
    Let $M$ be the ultrapower of any marking of $G$. Then, $\phi^\mu(f)=0$ for some $f\in M$. Let $H$ be the Schreier graph of any standard actions with elementary embedding into $M$ that contains $f$.
\end{proof}

For example, if $G$ has an approximate 2-coloring it is local-global equivalent to a graph with a 2-coloring.

\begin{prop}
    Any graph-theoretic parameter expressible by a first order sentence in $M(\ax)^+$ is continuous with respect to local-global convergence.
\end{prop}
So, for instance, the spectral radius of the adjacency operator is continuous with respect to local-global convergence.

\section{Cayley diagrams and lifting lemmas}

Fix a marked group $\Gamma$ with generating set $E$. In this section, we'll characterize when $\Gamma$-fiid measures and $\aut(\cay(\Gamma))$-fiid measures have the same combinatorics. First, we'll settle some conventions about actions and show how questions about factor of iid measures translate into problems about pmp graphs. 

\subsection{Fiid processes and Bernoulli shifts}
 Recall that we define $\cay(\Gamma)$ as the Schreier graph with respect to the left action of $\Gamma$ on itself. The edges in $\cay (\Gamma)$ are of the form $(\gamma\cdot x, x)$ with $x\in \Gamma$ and $\gamma$ in the generating set $E$. So, $\Gamma$ embeds into $\aut(\cay(\Gamma))$ by $\gamma\mapsto \ip{\gamma}$ where $\ip{\gamma}(x)=x\gamma\inv$. If the context is clear we may suppress these wickets. This also decides our convention for Bernoulli shifts.

 \begin{dfn}
     For $A$ any set of labels and $X$ a space of (edge or vertex) labellings of $\cay(\Gamma)$, the shift action $s:\aut(\cay(\Gamma))\curvearrowright X$ is given by shifting indices. That is,
     \[f\cdot x=x\circ f\inv\]

     In particular, $s:\Gamma\curvearrowright A^\Gamma$ is given by
     \[(\gamma\cdot x)(\delta)=(\ip{\gamma}\cdot x)(\delta)=x\bigl(\ip{\gamma}\inv (\delta)\bigr)=x(\delta\gamma).\]
 \end{dfn}

We can naturally associate a pmp graph to the action of $\Gamma$ on a Bernoulli shift, namely the Schreier graph. The labellings of this graph exactly correspond to $\Gamma$-factor of iid processes. It turns out there is a similar pmp graph whose labellings correspond to $\aut(\cay(\Gamma))$-fiid processes.

\begin{dfn}
    For a finitely generated, marked group $\Gamma$,
    \[\Fr([0,1]^{\Gamma})=\bigl\{x\in [0,1]^\Gamma: (\forall f\in \aut(\cay(\Gamma)))\;\bigl(f\not=\operatorname{id} \rightarrow f\cdot x\not= x\bigr)\bigr\}.\] Note that $\Fr([0,1]^{\Gamma})$ has measure $1$ with respect to $\lambda^\Gamma$, where $\lambda$ is Lebesgue measure (it contains all the injective labellings).

    The \textbf{Bernoulli shift graph} of $\Gamma$ is
    \[\s S(\Gamma)=\sch(s)\] where $s: \Gamma\curvearrowright \Fr([0,1])^{\Gamma}$ is the shift action above. That is, $\s S(\Gamma)$ has vertex set $[0,1]^\Gamma$ and edges $\{x,\gamma\cdot x\}$ for $x\in [0,1]^\Gamma$ and $\gamma\in E$.

    Let $R=\aut_e\bigl(\cay(\Gamma)\bigr)=\{f\in \aut(\cay(\Gamma)): f(e)=e\}$. The \textbf{Bernoulli graphing} of $\cay(\Gamma)$ is the graph $\widetilde{\s S}(\cay(\Gamma))$ on $\Fr([0,1]^{\Gamma})/R$ with edges
    \[\{(R\cdot x,R\cdot y):(\exists \gamma\in E)\;  R\gamma \cdot x\cap R\cdot y\not=\emptyset\}.\]
\end{dfn}

This is a special case of a more general construction of Bernoulli graphings over unimodular random graphs. One important point is that $\widetilde{\s S}(\cay(\Gamma))$ really is a function of the graph $\cay(\Gamma)$. The proof below gives a ``coordiate free" description of $\s S(\cay(\Gamma))$.

\begin{prop}
If $G, H$ are two Cayley graphs (possibly of different groups)  and $G\cong H$, then there is a measure-preserving graph isomorphism between $\widetilde{\s S}(G)$ and $\widetilde{\s S}(H)$
\end{prop}
\begin{proof}
    Fix a root for $G$, say $*$. We can equivalently present $\widetilde{\s S}(G)$ as the graph whose vertex set is injective labellings of $(G, *)$ up to root-preserving isomorphism where two labellings are adjacent if one can be gotten from the other by moving the root. Clearly an isomorphism between $G$ and $H$ induces an isomorphism between these graphs.
\end{proof}

Conley, Kechris, and Tucker-Drob show that $\Fr([0,1]^{\Gamma})/R$ is a standard Borel space, that every component of $\widetilde{\s S}(\cay(\Gamma))$ is isomorphic to $\cay(\Gamma)$, and that $\widetilde{\s S}(\cay(\Gamma))$ preserves the quotient measure and is expansive when $\Gamma$ is nonamenable. For completeness we include a brief sketch of their argument. Note that what we call $\widetilde {\s S}(\cay(\Gamma))$ is called $E$ in \cite{CKTD}. In several places, we will rely on the following computation:

\begin{prop} \label{prop:factorizations} Every element of $\aut(\cay(\Gamma))$ factorizes uniquely as $\ip{\gamma}\,r$ or $r'\ip{\gamma'}$ where $r,r'\in \aut_e(G)$ and $\gamma,\gamma'\in\Gamma$. These factorizations are related as follows:
\[r\,\ip{\gamma}=\ip{r(\gamma\inv)\inv} \bigl( \ip{r(\gamma\inv)}\, r \,\ip{\gamma}\bigr)\] 
\[\ip{\gamma} \,r=\bigl(\ip{\gamma}\,r\,\ip{r\inv(\gamma)\inv}\bigr)\ip{r\inv(\gamma)}\] with $ \bigl( \ip{r(\gamma\inv)}\, r \,\ip{\gamma}\bigr)$ and $\bigl(\ip{\gamma}\,r\,\ip{r\inv(\gamma)\inv}\bigr)$ $\aut_e(G)$.
\end{prop}
These expressions are clunky, but for the most part we only need to know that $\gamma'$ is in $E$ if and only if $\gamma$ is in $E$ when $\gamma,\gamma'$ are in the left and right factors of an automorphism as above.

\begin{prop}[{\cite[Lemma 7.9]{CKTD}}] \label{gtilde}
For any $x\in\Fr\bigl([0,1]^\Gamma\bigr)$, the map $\gamma\mapsto (R\gamma\inv)\cdot x$ is an isomorphism between $\cay(\Gamma)$ and the component of $R\cdot x$ in $\widetilde{ \s S}(\Gamma, E)$.
\end{prop}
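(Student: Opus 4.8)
The plan is to check directly that the map $\Phi\colon\gamma\mapsto(R\gamma\inv)\cdot x=R\cdot(\ip{\gamma\inv}\cdot x)$ is a bijection from the vertex set of $G$ onto the connected component of $R\cdot x=\Phi(e)$ in $\widetilde{\s S}(\Gamma,E)$ and that it both preserves and reflects edges; a bijection of vertex sets with these two properties is a graph isomorphism. Essentially everything follows from two facts. First, $\aut(G)$ acts freely on $\Fr$ by definition of $\Fr$, so whenever $f\cdot x=g\cdot x$ with $f,g\in\aut(G)$ we may cancel and conclude $f=g$; this drives injectivity and the ``only if'' half of the edge correspondence. Second, every element of $\aut(G)$ factors as $r\ip{\gamma}$ (equivalently $\ip{\gamma}r$) with $r\in R$ and $\gamma\in\Gamma$, as in (\ref{factorizations}), and each element of $R=\aut_e(G)$ is a graph automorphism of $G$ fixing $e$, hence permutes the neighbours of $e$, which are exactly the elements of $E$; this is what lets me identify $R$-orbits inside an $\aut(G)$-orbit with vertices of $G$ in an edge-compatible way.

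I would dispatch the routine parts first. The map $\Phi$ lands in $V=\Fr/R$ since $\Fr$ is $\aut(G)$-invariant, and visibly $\Phi(e)=R\cdot x$. For injectivity, $\Phi(\gamma)=\Phi(\delta)$ gives $r\ip{\gamma\inv}\cdot x=\ip{\delta\inv}\cdot x$ for some $r\in R$, so by freeness $r\ip{\gamma\inv}=\ip{\delta\inv}$, hence $r=\ip{\delta\inv\gamma}$; since $r\in R$ fixes $e$ this forces $\delta\inv\gamma=e$. For the forward direction of the edge correspondence, if $\gamma\inv\delta=\epsilon\in E$ then $\ip{\delta\inv}\cdot x=\ip{\epsilon\inv}\cdot(\ip{\gamma\inv}\cdot x)$, and since $E$ is symmetric ($\epsilon\inv\in E$) this point lies in $\epsilon\inv R\cdot(\ip{\gamma\inv}\cdot x)\cap R\cdot(\ip{\delta\inv}\cdot x)$, so $(\Phi(\gamma),\Phi(\delta))$ is an edge of $\widetilde{\s S}(\Gamma,E)$.

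The two places where the second fact is needed are edge-reflection and surjectivity onto the \emph{entire} component. Unpacking the definition, if $(\Phi(\gamma),\Phi(\delta))$ is an edge then $\ip{\epsilon}r\ip{\gamma\inv}\cdot x=r'\ip{\delta\inv}\cdot x$ for some $\epsilon\in E$ and $r,r'\in R$; freeness gives $\ip{\delta\inv\gamma}=r'\inv\ip{\epsilon}r$, and evaluating at $e$ yields $\delta\inv\gamma=r'\inv(\epsilon)$, which is a neighbour of $e$ since $r'\inv\in R$ fixes $e$; hence $\delta\inv\gamma\in E$ and $(\gamma,\delta)$ is an edge of $G$. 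For surjectivity, $\Phi[G]$ is connected (edge-preservation, $G$ connected) and contains $R\cdot x$, so $\Phi[G]$ is contained in the component of $R\cdot x$; conversely $\Phi[G]$ is closed under passing to $\widetilde{\s S}(\Gamma,E)$-neighbours, for if $(\Phi(\gamma),R\cdot v)$ is an edge, witnessed by $\ip{\epsilon}r\ip{\gamma\inv}\cdot x=r'\cdot v$, then $R\cdot v=R\cdot(\ip{\epsilon}r\ip{\gamma\inv}\cdot x)$, and factoring $\ip{\epsilon}r\ip{\gamma\inv}=r''\ip{\zeta}$ with $r''\in R$, $\zeta\in\Gamma$ gives $R\cdot v=R\cdot(\ip{\zeta}\cdot x)=\Phi(\zeta\inv)$. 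As the component is by definition everything reachable from $R\cdot x$, this yields $\Phi[G]\supseteq$ the component, completing the argument.

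I expect the only genuinely substantive step to be this last one, surjectivity onto the entire component, which is precisely where the factorization $\aut(G)=R\Gamma$ is indispensable. The one subtlety worth flagging is handedness: it is $\gamma\mapsto R\cdot(\ip{\gamma\inv}\cdot x)$, and not $\gamma\mapsto R\cdot(\ip{\gamma}\cdot x)$, that is edge-compatible, because $\gamma\mapsto\gamma\inv$ is in general not an automorphism of $\cay(\Gamma,E)$.
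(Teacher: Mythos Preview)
Your argument is correct and follows essentially the same route as the paper's sketch: freeness of the $\aut(G)$-action on $\Fr$ for injectivity, the fact that elements of $R$ permute $E$ (together with the factorization identity~(\ref{factorizations})) for edge preservation and reflection, and closure of the image under passing to neighbours for surjectivity onto the component. The only cosmetic difference is that the paper factors the map as the embedding $\gamma\mapsto\ip{\gamma\inv}\cdot x$ of $G$ into $\sh$ followed by the quotient $\pi\colon\sh\to\widetilde{\s S}(\Gamma,E)$ and then argues about $\pi$ restricted to a component, whereas you verify the composed map $\Phi$ directly; the underlying computations are the same.
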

\begin{proof}[Sketch of proof] In fact, we'll prove something stronger. Let $\pi: \Fr([0,1]^\Gamma)\rightarrow \Fr([0,1]^\Gamma)/R$ be the quotient map
\[\pi(x)=R\cdot x.\] We'll show that for each $x\in\widetilde{\s S}(\cay (\Gamma))$, $\pi\inv[x]$ meets each component of $\s S(\Gamma)$ at most once, and that an edge between $x$ and $y$ in $\widetilde{\s S}(\cay(\Gamma))$ corresponds to a perfect matching between $\pi\inv[x]$ and $\pi\inv[y]$ in $\s S(\Gamma)$. Then, $\pi$ restricts to an isomorphism on components of $\s S(\Gamma)$ and the proposition follows.

Consider a vertex $R\cdot x$ of $\widetilde{\s S}(\cay(\Gamma))$ and suppose that $x'\in (R\cdot x)\cap (\Gamma\cdot x)$ is another point in $R\cdot x$ in the same component of $\s S(\Gamma)$ as $x$. This means $r\cdot x=\gamma\cdot x$ for some $r\in R$ and $\gamma\in \Gamma$. By free-ness of the action, $\gamma=r$. But since $r$ doesn't move the identity, $\gamma=r=e$. So each vertex of $\widetilde{\s S}(\cay(\Gamma))$ meets each component of $\s S(\Gamma)$ at most once.

Now, suppose that $R\cdot x$ and $R\cdot y$ are neighbors in $\widetilde {\s S}(\cay (\Gamma))$. Then there is some $r\in R$ and $\gamma\in E$ so that $r\inv\cdot y=\gamma\cdot x$. So, by Proposition \ref{prop:factorizations}, \[y=r\,\ip{\gamma}\cdot x=\ip{r(\gamma\inv)\inv}\,r'\cdot x\] for some $r'\in x$. Since $E$ is the set of vertices of $\cay{\Gamma}$ at distance $1$ from the identity, $r(\gamma\inv)\inv\in E$. This means $(y, \ip{r(\gamma\inv)\inv}\,r' \cdot x)$ is an edge in $\s S(\Gamma)$. We hae shown that every element of $R\cdot y$ is adjacent to some element of $R\cdot x$. By symmetry, the reverse is true. And so, by the previous paragraph, $\s S(\Gamma)$ restricted to $R\cdot x$ and $R\cdot y$ is a perfect matching.
\end{proof}

The other properties of $\widetilde {\s S}(\Gamma, E)$ in \cite{CKTD} follow similarly by considering edges in $\widetilde {\s S}(\Gamma, E)$ as matchings in $\s S(\Gamma, E).$

Any of our definitions about local statistics can also be made for random labellings of bounded degree graphs. But, the only such random labellings which figure in this paper are factor of iid processes. We won't burden the reader with yet more definitions, and will instead rely on the correspondence between fiid labellings of $\cay(\Gamma)$ and measurable labellings of $\s S(\Gamma)$ and $\widetilde {\s S}(\cay(\Gamma))$.

The following proposition makes this correspondence between fiid processes and pmp graphs explicit:

\begin{prop}[c.f. the proof of {\cite[theorem 7.7]{CKTD}}]\label{meastoiid} Fix a marked group $\Gamma$ and let $G=\cay(\Gamma)$. The following are equivalent for any $\nu\in P_1\bigl(\bb L(r,k,\Gamma)\bigr)$: 
\begin{enumerate}
\item There is a $\Gamma$-fiid random $[k]$-labelling of $G$, $\bf f$, so that for any $f\in \bb L(r,k,\Gamma)$
\[\bb P(\bf f\res B_r(e)=f)=\nu(\{f\})\]
\item There is a measurable labelling $f$ of $\s S(\Gamma)$ with $\mu^r(f)=\nu$.
\end{enumerate}
Similarly, for $\nu\in P_1\bigl(\bb L(r,k,d)\bigr)$, the following are equivalent:
\begin{enumerate}
    \item There is an $\aut(G)$-fiid random labelling of $G$, $\bf f$, so that, for any possible colored finite neighborhood $[H,o,f]\in \bb L(r,k,d)$
    \[\nu\bigl(\{[H,o,f]\}\bigr)=\bb P\bigl( (H,o,f)\approx(B_r(e), e,\bf f\res B_r(e))\bigr)\]
    \item There is a measurable labelling $f$ of $\widetilde{\s S}(G)$ with $\mu^r(f)=\nu$.
\end{enumerate}
\end{prop}
\begin{proof}

For the first statement, suppose $f$ is a measurable $[k]$-labelling of $\s S(\Gamma)$. We'll define a $\Gamma$-fiid random labelling $\bf f$ of $\Gamma$ with the same local statistics. For $x\in \Fr([0,1]^\Gamma)$ set \begin{equation*}\label{eq:msrtoiidalg} \bf f(x)(\gamma)=f(\gamma\cdot x).\end{equation*} Then, 
\begin{align*} \bf f(\delta\cdot x)(\gamma) & = f(\gamma \delta\cdot x) \\
 & = \bf f(x)(\gamma\delta) \\
 \; & = \bigl(\delta\cdot\bf f(x)\bigr)(\gamma),
\end{align*} so $\bf f$ is equivariant. Now we compute the local statistics. Write $\mu$ for $\lambda^\Gamma$, the measure preserved by $\s S(\Gamma)$. For any $h: B_r(e)\rightarrow [k]$,
\[\bb P\bigl((\bf f(x)\res B_r(e))=h\bigr)=\mu\bigl(\{x: (\forall \gamma\in B_r(e))\;f(\gamma\cdot x)=h(\gamma)\}\bigr) =\mu^r(f). \]

Conversely, if we're given a factor map $\bf f$, the same algebra shows we can define a measurable labelling $f$ via $f(x)=\bf f(x)(e)$.


For the second statement, we'll handle the case of vertex labellings. The general case is only a notational change. Abbreviate $\aut_e(G)$ as $R$. If $\widetilde f$ is a measurable $[k]$-labelling of $\widetilde{ \s S}(\Gamma, E)$, then Proposition \ref{gtilde} implies that $f(x)=\widetilde f(R\cdot x)$ is a measurable labelling of $\s S(\Gamma)$ which is invariant under the action of $R$. As above, this gives rise to a $\Gamma$-factor map $\bf f$ with the same local statistics. 

Since $\aut(G)=\Gamma R$, it suffices to check that $\bf f$ is also $R$-equivariant. By the factorization relations, Proposition \ref{prop:factorizations} above, for any $\gamma\in \Gamma,$ $\ip{\gamma}\, r=r'\,\ip{r\inv(\gamma)}$ for some $r'$. So \begin{align*}\bf f(r\cdot x)(\gamma) & = f((\ip{\gamma}\, r)\cdot x) \\
 & = f(r'\ip{r\inv(\gamma)}\cdot x) \\
 & = f(\ip{r\inv(\gamma)} \cdot x) \\
 & = \bf f(x)(r\inv(\gamma)) \\
 & = (r\cdot \bf f(x))(\gamma).
\end{align*}

Conversely, if $\bf f$ is an $\aut(G)$-\fiid labelling of $G$, then as above $\bf f$ gives rise to an $R$-invariant measurable labelling of $\s S(\Gamma)$, $f$. Since $f$ is $R$-equivariant, $\widetilde f(R\cdot x)=f(x)$ defines a measurable $\varphi$-decoration of $\widetilde{ \s S}(\Gamma)$.
\end{proof} 

\subsection{Cayley diagrams}

Earlier we used the fact that actions of $C_2^{*n}$ are coded by edge labellings with simple combinatorial constraints. One can check that free actions are coded by proper edge colorings. Cayley diagrams generalize this idea.

\begin{dfn}
    A $\Gamma$-\textbf{Cayley diagram} on a graph $G=(V,F)$ is a labelling of edges by generators, $c: F\rightarrow E$, so that
    \begin{enumerate}
    \item for every vertex $x$ and every $\gamma\in E$ there is at exactly one neighbour with $c(x,y)=\gamma$
    \item for any path $(x_0,...,x_n)$ in $G$, $c(x_n,x_{n-1})...c(x_{1},x_0)=e$ if and only if $x_0=x_n.$
    \end{enumerate}

    The \textbf{standard Cayley diagram} of $\cay(\Gamma)$ is
    \[c_{st}(\gamma x,x)=\gamma.\]
\end{dfn}

A graph can only admit a $\Gamma$-Cayley diagram if every component is isomorphic to $\cay(\Gamma)$, and whether an edge labelling $d$ is a Cayley diagram is determined by the sequence of measures $\mu^r(d)$ for $r\in \N$. In fact, if $\Gamma$ is finitely presented with no relation longer than $n$, then $\mu^{n+1}$ detects whether $d$ is a Cayley diagram. The next proposition is key:

\begin{prop}\label{prop:cdaction}
    If $G$ is a (pmp) graph which admits a (measurable) $\Gamma$-Cayley diagram, then $G=\sch(\ax)$ for a free (pmp) action of $\Gamma$.
\end{prop}
\begin{proof}
    Suppose $c:G\rightarrow E$ is a Cayley diagram and define an action on generators by setting $\gamma\cdot x=y$ if $c(y,x)=\gamma$. This is well defined by point $(1)$ of the definition of Cayley diagrams and it extends to a free action by part $(2)$.
\end{proof}

Cayley diagrams come with quite a lot of structure. The next few propositions give an exact correspondence between Cayley diagrams and elements of $\aut_e\bigl(\cay(\Gamma)\bigr)$.

\begin{prop}\label{prop:actionrotationdiagram}
For any Cayley diagram $c$ on $G=\cay(\Gamma)$, there is a unique rotation $r_c\in\aut_e(G)$ and action $\cdot_c: \Gamma\curvearrowright G$ so that for all $x\in \Gamma, \gamma\in E$ the following diagram commutes:
\[
\xymatrix{
x \ar[r]^{r_c} \ar[d]^{\gamma} & r_c(x) \ar[d]^{c(\gamma x,x)} \\
\gamma x\ar[r]^{r_c} &\gamma\cdot_c r_c(x)
} 
\]
meaning
\[r_c(\gamma x)=c(\gamma x, x)r_c(x)=\gamma\cdot_c r_c(x).\]
Similarly, any rotation or action gives a unique way to fill out this diagram.
\end{prop}
\begin{proof}
Uniqueness is clear since $r_c(e)=e$ and the definition tells us how to compute $r_c(\gamma)$ for $\gamma=\gamma_n...\gamma_2\gamma_1$ by filling out the diagram \[e\rightarrow \gamma_1\rightarrow \gamma_2\gamma_1\rightarrow ... \rightarrow \gamma.\] For existence, it suffices to note that item $(2)$ in the definition of Cayley diagram tells us that any two expressions for $\gamma$ in terms of generators gives the same value of $r_c(\gamma)$ in this computation.
\end{proof}

It will be useful to know how this translation interacts with automorphisms.

\begin{dfn}
    $\mathrm{CD}$ is the space of Cayley diagrams of $\cay(\Gamma)$.
\end{dfn}

\begin{prop}
Let $\aut_e(G)$ act on $\cd$ by shifting indices. For $r\in \aut_e(G)$, $\gamma\in \Gamma$ and $d\in \cd$, we have
\[r_{r\cdot c}=r_c\circ r\inv \mbox{, and}\quad r_{\ip{\gamma}\cdot c}=\ip{r_c(\gamma)}\,r_c\,\ip{\gamma}\inv \]
\end{prop}
\begin{proof}
Let $h=\ip{r_c(\gamma)}\,r_c\ip{\gamma}\inv$. We have that, for any $x\in \Gamma$, $\delta\in E$, 

\[(r\cdot c)(\delta x, x)=c(r\inv(x), r\inv(\delta x))\] and 
\[(\ip{\gamma}\cdot c)(\delta x, x)=c(\ip{\gamma}\inv(\delta x), \ip{\gamma}\inv(x))=c(\delta x\gamma, x\gamma).\] And, $h(x)=r_c(x\gamma)r_c(\gamma)\inv$. So the following diagrams commute:
\[
\xymatrix{
x \ar[r]^<<<<<<{r_c r\inv} \ar[d]^{\delta} & (r_c r\inv)(x) \ar[d]^{(r\cdot c)(\delta x, x)} &\quad  & x \ar[r]^<<<<<{h} \ar[d]^{\delta} & r_c( x\gamma)r_c(\gamma)\inv \ar[d]^{(\ip{\gamma }\cdot c)(\delta x, x)}  \\
\delta x\ar[r]^<<<<<<{r_c r\inv} &(r_c r\inv)(x) & & \delta x\ar[r]^<<<<<{h} & r_c(\delta x\gamma)r_c(\gamma)\inv
} 
\] The proposition then follows by the uniqueness of $r_c$.
\end{proof}
\begin{cor}
The action of $\aut_e(G)$ on $\cd$ is free and transitive. Thus, $\cd$ admits a unique $\aut(G)$-invariant measure.
\end{cor}
\begin{proof}
From the computation of $r_{r\cdot c}$ we can see that, for any $d_0\in \cd$, the map $r\mapsto r\cdot c_0$ has inverse $d\mapsto r_c\inv r_{d_0}$. Since the group $R:=\aut_e(G)$ is compact it has a unique 2-sided invariant Haar measure, $h$. And since the diagram $d_0(x,\gamma x)=\gamma$ is invariant under the action of $\Gamma$, we get an invariant measure
\[\mu(A)=\int_{r\in R} \mathbf{1}_{r\cdot c_e\in A}\;dh.\] Conversely, any invariant measure $\mu$ on $\cd$ gives an invariant measure $\tilde h$ on $R$ by \[\tilde h(A)=\mu(A\cdot c_0)\] which must agree with $h$.
\end{proof}

Note that this last corollary means that checking if there is an $\aut(G)$-fiid Cayley diagram amounts to checking if this unique measure $\mu$ is $\aut(G)$-fiid. This puts the question squarely in the realm of Ornstein theory, i.e.~the general problem of determining which measures are fiid.

\subsection{Lifting lemmas}

If $\widetilde{\s S}(G)$ has a measurable Cayley diagram, then any $\Gamma$-fiid labelling of $G$ can be lifted to an $\aut(G)$-fiid labelling. This is essentially folklore (see the comments after \cite[Question 2.4]{Lyons2016FactorsOI}), but we record a proof here for completeness. The basic idea is to average over the action of $\aut_e(G)$, this time using our Cayley diagram to choose a random rotation and resample our variables.

\begin{prop}
    If $G=\cay(\Gamma)$ admits an $\aut(G)$-fiid Cayley diagram, then for any $\Gamma$-fiid random labelling $\bf f: G\rightarrow [k]$, there is an $\aut(G)$-fiid random labelling $\tilde{\bf f}$ so that for any $[H,o,g]\in \bb L(r,k,d)$
    \[\bb P\bigl((B_r(e),e,\bf f)\approx (H,o,g)\bigr)=\bb P\bigl((B_r(e),e, \tilde{\bf f})\approx (H,o,g)\bigr).\]
\end{prop} 

\begin{proof}
For clarity we will distinguish between $\gamma\in \Gamma$ and its canonical image in $\aut(G)$, $\ip{\gamma}$.

Let $F$ be an $\aut(G)$-factor map from $[0,1]^\Gamma$ to $\cd$, and let $\bf f$ be a $\Gamma$-fiid random labelling of $G$. For the sake of readability we will assume $\bf f$ is a vertex labelling, but the general case doesn't introduce any conceptual difficulty.

We can associate to $x\in [0,1]^\Gamma$ an element of $\aut_e(G)$ given by $h_x:=r_{F(x)}$, where $r_c$ is as in Proposition \ref{prop:actionrotationdiagram}. From the previous proposition $h_{r\cdot x}=h_xr\inv$ and $h_{\gamma\cdot x}=\ip{r_c(\gamma)}\,r_c\, \ip{\gamma}\inv$

We will build a factor map out of $([0,1]^2)^\Gamma$ (this is of course isomorphic to $[0,1]^\Gamma$). Set
\[ \widetilde{\bf f}(x,y)= h_{x}\inv\cdot\bf f(h_{x}\cdot y).\] We can check that $\widetilde{\bf f}$ and $\bf f$ have the same local statistics. Since $h_y$ acts on $[0,1]^\Gamma$ in a measure preserving way, for any isomorphism invariant set of labellings $A$,
\begin{align*}
    \bb P_{xy}\bigl( \tilde{\bf f}(x,y)\res B_r(e)\in A\bigr) = & \bb P_{xy}\biggl( \bigl(h_{y}\inv \cdot \bf f(h_y \cdot x)\bigr)\res B_r(e) \in A\biggr)\\ = & \bb P_{x,y} \bigl( \bf f(h_y\cdot x)\res B_r(e)\in A \bigr) \\
    = & \bb \int_y\bb  P_{x}\bigl(\bf f(h_y\cdot x)\res B_r(e)\in A\bigr) \;d\lambda^{G}\\
    = & \int_y \bb P_x\bigl( \bf f(x) \res B_r(e)\in A\bigr) \;d\lambda^G\\
    = & \bb P_x \bigl(\bf f(x)\res B_r(e)\in A\bigr)
\end{align*}

Now we just need to show that $\widetilde {\bf f}$ is $\aut(G)$-equivariant. We check against automorphisms $\ip{\gamma}$ and $r\in \aut_e(G)$ separately. If $\gamma\in \Gamma$, 
\begin{align*}\widetilde {\bf f}(\ip{\gamma}\cdot(x,y)) & =h_{\ip{\gamma}\cdot x}\inv\cdot \bf f(h_{\ip{\gamma}\cdot x}\ip{\gamma}\cdot y) \\
  &= \left(\ip{h_{x}(\gamma)} h_{x} \ip{\gamma}\inv\right)\inv\cdot  \bf f( \ip{h_{x}(\gamma)} h_{x} \ip{\gamma}\inv\ip{\gamma}\cdot y) \\
  &= \ip{\gamma} h_{x}\inv \cdot \bf f(h_{x}\cdot y) \\
  & = \ip{\gamma} \cdot \widetilde{\bf f}(x,y)\end{align*}
  and
  \[h_{r\cdot x}\inv \cdot \bf f(h_{r\cdot x}r\cdot y )= r h_{x}\inv\cdot \bf f(h_{x}\cdot y).\]
\end{proof}

This proposition above tells us that if $\widetilde{\s S}(\cay(\Gamma))$ admits a Cayley diagram, then every $\Gamma$-fiid labelling of $\Gamma$ has \emph{exactly} the same local statistics as an $\aut(G)$-fiid labelling. If we allow ourselves any $\epsilon$ of slack, that is if we want to characterize $\s L_{rk}(\widetilde{\s S}(G))$, then it's enough to know that we can approximate a Cayley diagram.

\begin{dfn}
    An \textbf{approximate Cayley diagram} for a pmp graph $H$ is a sequence of edge labellings $\ip{f_i: i\in\N}$ so that the set of vertices where one of the defining properties of Cayley diagrams is violated vanishes. That is, for all $r$
    \[\mu^r(f_i)\bigl([B_r(e),e,c_{st}]\bigr)\rightarrow 1.\] 
\end{dfn}

Note that, by Proposition \ref{prop:replace} a pmp graph $H$ has an approximate Cayley diagram if and only if it is local-global equivalent to a pmp graph with a measurable Caley diagram.

\begin{thm}
    If $G=\cay(\Gamma)$ admits an $\aut(G)$-fiid Cayley diagram then $\widetilde{\s S}(G)$ is local-global equivalent to $\s S(\Gamma)$. In terms of fiid labellings, for any $\Gamma$-fiid random labelling $\bf f: G\rightarrow [k]$, and any $[H,o,g]\in \bb L(r,k,d)$,
    \[\bb P\bigl((H,o,g)\approx (B_r(e),e,\bf f)\bigr)=\lim_{i\rightarrow \infty} \bb P\bigl((H,o,g)\approx (B_r(e),e,\bf f_i)\bigr)\] for some sequence of $\aut(G)$-fiid labellings $\ip{\bf f_i:i\in\N}$.
\end{thm}
\begin{proof}
    In this case, $\widetilde{\s S}(G)$ is local-global equivalent to a pmp graph $H$ with a measurable Cayley diagram. Since $H$ has a measurable Cayley diagram, it is the Schreier graph of of some free pmp action of $\Gamma$, say $\ax$. By \'Abert--Weiss, $\ax$ weakly contains the shift action, $s$. By Proposition \ref{prop:localglobalequivalents}, $H$ local-global contains $\s S(\Gamma)$.
\end{proof}

\begin{cor}
    If $G=\cay(\Gamma)$ has an $\aut(G)$-fiid approximate Cayley diagram, then $\s S(\Gamma)$ and $\widetilde{\s S}(G)$ have the same independence ratio, matching ratio, etc.
\end{cor}

There are more hands-on ways to prove this theorem, of course, but the limit theory neatly packages the calculations involved.

\section{Examples}

So, which groups have approximate and measurable Cayley diagrams? First off, we should isolate the trivial case where $\Gamma=\aut(G)$.

\begin{dfn}
    A graph $G$ is a \textbf{graphical regular representation}, or \textbf{GRR}, of $\Gamma$ if $\Gamma$ acts freely and transtitively on $G$ by automorphisms. 
\end{dfn}

A GRR of $\Gamma$ must be a Cayley graph of $\Gamma$. For instance, For instance $D_\infty={\ip{ r,s: rs=sr\inv, s^2=e}}$ has a GRR induced by the generating set $\{r,s,rs,r^{-2}s \}$ (see Figure \ref{fig:grr}), and $F_2=\ip{a,b}$ has a GRR induced by $\{a,b,ab,ba,baa\}$. Of course if $\Gamma$ is a marked group whose Cayley graph $G$ is a GRR, $\widetilde{\s S}(G)$ has an $\aut(G)$-fiid Cayley diagram.

\begin{figure}
\centering
\begin{tikzpicture}

\draw[black, thin] (-5,1) -- (5,1);
\draw[black, thin] (-5,-1) -- (5,-1);
\draw[black, thin] (-4,1) -- (-4,-1);
\draw[black, thin] (-2,1) -- (-2,-1);
\draw[black, thin] (0,1) -- (0,-1);
\draw[black, thin] (2,1) -- (2,-1);
\draw[black, thin] (4,1) -- (4,-1);
\draw[black, thin] (-4,1) -- (-2,-1);
\draw[black, thin] (-2,1) -- (0,-1);
\draw[black, thin] (0,1) -- (2,-1);
\draw[black, thin] (2,1) -- (4,-1);
\filldraw[black] (-4,1) circle (2pt);
\filldraw[black] (-2,1) circle (2pt);
\filldraw[black] (0,1) circle (2pt);
\filldraw[black] (2,1) circle (2pt);
\filldraw[black] (4,1) circle (2pt);
\filldraw[black] (-4,-1) circle (2pt);
\filldraw[black] (-2,-1) circle (2pt);
\filldraw[black] (0,-1) circle (2pt);
\filldraw[black] (2,-1) circle (2pt);
\filldraw[black] (4,-1) circle (2pt);

\foreach \x in {-4, -2, 0}{
    \draw[black, thin] (\x,-1) -- (\x+4,1);}

\end{tikzpicture}
\caption{ A GRR for $D_\infty$} \label{fig:grr}
\end{figure}

 Slightly less trivially, actions of amenable groups are measure hyperfinite, so their approximate theory collapses to the classical combinatorics of their Cayley graph. A straightforward argument gives amenability of the connectedness relation of Bernoulli graphings over the Cayley graphs of amenable group, so these are local-global equivalent to the corresponding Bernoulli shift graphs. In detail:

\begin{thm}\label{approxamen}
If $\Gamma$ is an amenable marked group and $G=\cay(\Gamma)$, then 
\begin{enumerate}
    \item All free pmp actions of $\Gamma$ are weakly equivalent
    \item If every component of $H$ is isomorphic to $G$, then $H$ is local-global equivalent to $\s S(\Gamma)$.
\end{enumerate}
\end{thm}
\begin{proof}
Point $(1)$ is well known and can be found in \cite[Prop 13.2]{GlobalAspects}. Roughly, hyperfiniteness of the action of the action allows you to approximate any labelling on large almost invariant sets. For point $(2)$ we show that $\widetilde{\s S}(G)$ is hyperfinite, then use this same idea to get an approximate Cayley diagram. The rest follows by the Hatami--Lovasz--Szegedy containment theorem.

To get hyperfiniteness we will show that the connectedness relation of $\widetilde{ \s S}(\Gamma, E)$ is amenable (as in {\cite[Chapter 9]{KM}}). Since $\Gamma$ is amenable, we have amenability measures $\nu_x$ for the action of $\Gamma$ on $\Fr([0,1]^G)$. Abbreviate $\aut_e(G)$ as $R$. Define
\[\nu_{R\cdot x}=\int_{r\in R} \nu_{r\cdot x} \; dh(r) \] where $h$ is the Haar measure on $R$.

This is a well defined measurable assignment of finitely additive measures. We check that this is invariant. Note that if $R\cdot x$ is connected to $R\cdot y$, then by Proposition \ref{gtilde} we can take $y$ to be $\gamma\cdot x$ for some $\gamma\in \Gamma$. We have
\[r\,\ip{\gamma}\cdot x=\ip{r(\gamma\inv)}\inv \ip{r(\gamma\inv)}\,r\,\ip{\gamma} \cdot x\] and $\ip{r(\gamma\inv)}\,r\gamma\in R$. The map $r\mapsto \ip{r(\gamma\inv)}\,r\,\ip{\gamma}$ is a measure preserving bijection with inverse $r'\mapsto \ip{r'(\gamma)}\,r'\ip{\gamma\inv}$. So, if we let $R_\delta=\{r\in R:r(\gamma\inv)=\delta\}$, then $R=\bigsqcup_{\delta} R_\delta=\bigsqcup_{\delta} \delta R_\delta \gamma$. So, using the $\Gamma$-invariance of $\nu$ and unimodularity of $R$ we have
\begin{align*}
\int_R \nu_{r\,\ip{\gamma}\cdot x}dh &= \sum_\delta \int_{R_\delta} \nu_{\delta\inv\delta r \gamma \cdot x} \; dh \\
\; & = \sum_\delta \int_{R_\delta} \nu_{\delta r \gamma}\; dh \\
\; & = \sum_\delta \int_{\delta R_\delta \gamma} \nu_{r\cdot x}\; c(\delta\cdot h\cdot \gamma)   \\
\; & = \int_R \nu_{r\cdot x}\; dh.
\end{align*}

So, $\widetilde{\s S}(G)$ has an amenable connectedness relation is hyperfinite. That is, we have an increasing sequence of measurable subgraphs $E_n\subseteq \widetilde{\s S}(G)$ with finite components so that, for all $i$,
\[\bigcup_n\{x: B_i(x)\subseteq [x]_{E_n}\}=\widetilde{ \s S}(\Gamma, E).\] For any $i\in\N$, there is some $n$ with $B_i(x)$ contained in the $E_n$-component of $x$ for all but $\epsilon$-many $x$. By Lusin--Novikov we can choose a partial Cayley diagram on each $E_n$-component to get a Cayley diagram off a set of measure $\epsilon$.

\end{proof}

If we aren't allowed even an $\epsilon$ of error, the measurable combinatorics of amenable groups can be very intricate. For a survey of the situation on grids (i.e. on Cayley graphs of $\Z^n$), see \cite{gridsSurvey}. 

The best tool we have for ruling out factor of iid processes in the amenable setting (and often in the non-amenable setting) is ergodicity. For Cayley diagrams, we have the following criterion:

\begin{prop}\label{subgp}
Suppose there is a measurable subgroup $H<\aut(G)$ where $\Gamma\subseteq H$ and \[1<[\aut_e(G):H\cap \aut_e(G)]<\infty.\] Then $G$ does not admit an $\aut(G)$-fiid $\Gamma$-Cayley diagram.
\end{prop}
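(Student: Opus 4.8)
The plan is to exploit the ergodicity (or rather, non-ergodicity) of the $\aut(G)$-invariant measure on $\cd$ via Proposition \ref{cdtor}, which identifies $\Gamma\curvearrowright(\cd,\mu)$ with $\Gamma\curvearrowright(\aut_e(G),h)$, where $h$ is Haar measure and $\gamma\cdot r=\ip{r(\gamma\inv)}\inv r\ip{\gamma\inv}$. Suppose toward a contradiction that $G$ admits an $\aut(G)$-f.i.i.d.\ Cayley diagram. By the remark following the proof of Theorem \ref{transfer} (the ``minor modification''), an $\aut(G)$-invariant measure that is a $\Gamma$-factor of i.i.d.\ must in fact be an $\aut(G)$-factor of i.i.d.; in particular the Bernoulli shift $\Gamma\curvearrowright[0,1]^\Gamma$ factors onto $\Gamma\curvearrowright(\aut_e(G),h)$. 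Since the Bernoulli shift is ergodic and weakly mixing, so is any factor of it, so $\Gamma\curvearrowright(\aut_e(G),h)$ is ergodic.

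Now I would use the finite-index subgroup $H$ to manufacture a nontrivial invariant (in fact, finite) partition of $(\aut_e(G),h)$, contradicting ergodicity. Write $R=\aut_e(G)$ and $H_0=H\cap R$, a subgroup of finite index $k:=[R:H_0]>1$ in $R$. The cosets $\{rH_0: r\in R\}$ form a finite set of size $k$; consider the partition of $R$ (as a measure space under Haar measure $h$) into these cosets — or rather their translates, I'll need to pick the right action-equivariant one. The key computation is to check that the $\Gamma$-action $\gamma\cdot r=\ip{r(\gamma\inv)}\inv r\ip{\gamma\inv}$ permutes a suitable finite partition. The natural candidate: since $\Gamma\subseteq H$, for each $\gamma$ both $\ip{\gamma}$ and $\ip{\gamma\inv}$ lie in $H$; and $\ip{r(\gamma\inv)}$ — whether it lies in $H_0$ depends on $r$. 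One should instead consider the map $r\mapsto$ (the coset of $r$ in $H_0\backslash R$ or $R/H_0$). The point is that $\gamma\cdot r = \ip{r(\gamma\inv)}\inv\, r\, \ip{\gamma\inv}$ differs from $r\ip{\gamma\inv}$ by left multiplication by $\ip{r(\gamma\inv)}\inv\in R$, but this left factor depends measurably on $r$; however, left multiplication by any element of $R$ still permutes the (finitely many) right cosets $H_0\backslash R$. Meanwhile right multiplication by $\ip{\gamma\inv}\in H$ — if $\ip{\gamma\inv}\in H_0$, i.e.\ $\ip{\gamma\inv}\in R$, it would preserve right cosets $H_0 r$; but $\ip{\gamma\inv}$ need not be in $R$. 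So the right partition to take is the one invariant under right multiplication by all of $H$: that is, partition $R$ by the equivalence $r\sim r'$ iff $H_0 r$ and $H_0 r'$ lie in the same $H$-orbit under right translation — equivalently, use the finitely many double cosets $H_0\backslash R /(H\cap R)$... Actually the clean statement is: the sets $\{r\in R: \ip{\gamma}r \in H \text{ for all } \gamma\}$-type conditions are the wrong tack; instead, since $\Gamma\subseteq H$ and $[\,R:H_0\,]<\infty$, the subgroup $H$ acts on the finite set $R/H_0$ by right translation with a finite orbit, and any nontrivial union of orbits that is a proper nonempty subset gives the invariant partition. I would verify that $r\mapsto (H_0 r\cdot H\text{-orbit})$, suitably normalized, is a $\Gamma$-invariant map to a finite set that is nonconstant because $[R:H_0]>1$ forces the $H$-action on $R/H_0$ to have a proper invariant subset OR $H$ acts transitively, in which case one instead uses that $H_0$ itself has positive but non-full Haar measure and is $\Gamma$-invariant up to the twisting — this is the case to handle carefully.

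The main obstacle, and where I expect to spend the real effort, is precisely pinning down which finite partition of $(R,h)$ is genuinely $\Gamma$-invariant under the twisted action $\gamma\cdot r=\ip{r(\gamma\inv)}\inv r\ip{\gamma\inv}$ and verifying it is nonconstant. The subtlety is that the left factor $\ip{r(\gamma\inv)}\inv$ varies with $r$, so one cannot naively use a one-sided coset space; one must use a partition stable under left multiplication by the \emph{entire} rotation group $R$ on one side and under right multiplication by $H$ on the other, i.e.\ essentially the orbit equivalence relation of $R\times H$ acting on $\aut(G)$ by $(\rho,\eta)\cdot g = \rho g\eta\inv$, restricted appropriately. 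Since $\Gamma\subseteq H$ guarantees $\ip{\gamma\inv}\in H$, and left-$R$ is automatic, the twisted $\Gamma$-orbit of $r$ stays within its $R\times H$ double-coset-type class; this class has $h$-measure equal to $[R:H_0]\inv$-ish of the total by the index hypothesis, hence strictly between $0$ and $1$. That yields the nontrivial $\Gamma$-invariant set, contradicting ergodicity and completing the proof. I would also double-check the degenerate reading ``$\Gamma$-Cayley diagram'' in the statement just means the standard invariant measure $\mu$ on $\cd$, so that Proposition \ref{cdtor} applies verbatim.
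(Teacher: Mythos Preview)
Your overall strategy is exactly the paper's: use Proposition~\ref{cdtor} to transport the question to $\Gamma\curvearrowright(\aut_e(G),h)$, exhibit a nontrivial $\Gamma$-invariant set there, and conclude that $\mu$ cannot be a $\Gamma$-factor of the (ergodic) Bernoulli shift. But you make one misstep that sends you into an unnecessary maze of coset spaces: you write that the left factor $\ip{r(\gamma\inv)}\inv$ lies in $R=\aut_e(G)$. It does not. The element $r(\gamma\inv)$ is a vertex of $G$, i.e.\ an element of $\Gamma$, and $\ip{r(\gamma\inv)}\inv=\ip{r(\gamma\inv)\inv}$ is left translation by that group element, hence lies in the image of $\Gamma$ in $\aut(G)$, which is contained in $H$ by hypothesis.

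Once you see this, the argument collapses to one line. For any $r\in R$ and $\gamma\in\Gamma$,
\[
\gamma\cdot r \;=\; \underbrace{\ip{r(\gamma\inv)}\inv}_{\in\,\Gamma\subseteq H}\; r\; \underbrace{\ip{\gamma\inv}}_{\in\,\Gamma\subseteq H},
\]
so if $r\in H$ then $\gamma\cdot r\in H$; since the action stays inside $R$, in fact $\gamma\cdot r\in H\cap R=H_0$. Thus $H_0$ itself is a $\Gamma$-invariant Borel subset of $R$ with Haar measure $1/[R:H_0]\in(0,1)$, and the action is not ergodic. This is precisely the paper's proof. There is no need for double cosets, for the ``minor modification'' remark after Theorem~\ref{transfer}, or for worrying about whether $H$ acts transitively on $R/H_0$: the invariant set is just $H_0$.
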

\begin{proof}

Since $\Gamma<H$, $\aut_e(G)\cap H$ is invariant under the action of $\Gamma$ on $\aut_e(G)$. And, $H$ has Haar measure $1/[\aut_e(G):H\cap \aut_e(G)]$, so the action of $\Gamma$ on $(\aut_e(G), h)$ is not ergodic. By Proposition \ref{prop:actionrotationdiagram} neither is the action of $\Gamma$ on $(\cd,\mu)$. But since the shift action of $\Gamma$ is ergodic, every $\Gamma$-\fiid measure is ergodic.
\end{proof}

For grids, and in fact for Cayley graphs of any torsion-free nilpotent group, it turns out that there is separation between the $\aut(G)$- and $\Gamma$-fiid labellings.

\begin{thm} \label{nilpotent}
If $\Gamma$ is torsion-free and nilpotent and $E$ is any set of generators which does not induce a GRR of $\Gamma$, then $\cay(\Gamma,E)$ does not admit an $\aut(G)$-fiid Cayley diagram.
\end{thm}
\begin{proof}
By \cite[Theorem 1.2]{nilpotent}, $\aut_e(G)$ is the set of group automorphism which preserve $E$. In particular, $\aut_e(G)$ is finite. Thus $\Gamma\cap \aut_e(G)=\{e\}$ is finite index in $\aut_e(G)$ and the previous proposition applies with $H=\Gamma$.
\end{proof}

Indeed, this proposition applies to any $CI_f$ group (in the sense of \cite{CI}). The next theorem shows that the assumption that $\Gamma$ is torsion-free is necessary for the corollary above. Consider the discrete Heisenberg group 
\[H_3(\Z):=\left\{\begin{pmatrix} 1 & a & b \\ 0 & 1 & c \\ 0 & 0 & 1 \end{pmatrix}: a,b,c\in \Z\right\}.\] This is a torsion-free nilpotent group generated by the matrices
\[A = \begin{pmatrix} 1 & 1 & 0 \\ 0 & 1 & 0 \\ 0 & 0 & 1 \end{pmatrix} \quad B = \begin{pmatrix} 1 & 0 & 0 \\ 0 & 1 & 1 \\ 0 & 0 & 1 \end{pmatrix}\]

\begin{thm} \label{nilpex}
Let $F=\{A, B, A^2, A^2B, B^2\}\subseteq H_3(\Z)$, $\Gamma=C_2\times H_3(\Z)$, and $E=C_2\times (F\cup F\inv)$. Then $G=\cay(\Gamma, E)$ admits an $\aut(G)$-\fiid Cayley diagram and $|CD|=|\aut_e(G)|>1$.
\end{thm}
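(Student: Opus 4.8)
The plan is to compute $\cd$ and the $\Gamma$-action on it explicitly, using the conjugacy $\cd\cong\aut_e(G)$ of Proposition \ref{cdtor}, and then write an $\aut(G)$-factor map by hand.

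\emph{The shape of $G$.} Set $H=\cay(H_3(\Z),F\cup F\inv)$. Since $e\notin F\cup F\inv$, unwinding the definition of $E$ shows that $(\epsilon,g)$ is adjacent to $(\eta,h)$ in $G=\cay(\Gamma,E)$ exactly when $g$ is adjacent to $h$ in $H$, with no constraint on $\epsilon,\eta$; so $G$ is obtained from $H$ by splitting each vertex into two non-adjacent copies and each edge into a $K_{2,2}$ — the lexicographic product $H[\overline{K_2}]$.

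\emph{Computing $\aut(G)$ and $|\cd|$.} By \cite[Theorem 1.2]{nilpotent}, $\aut_e(H)$ is the group of automorphisms of $H_3(\Z)$ fixing $F\cup F\inv$ setwise. A short computation on the abelianization $\Z^2$ — where the images of $F\cup F\inv$ force any such automorphism to be $\pm I$, and $-I$ is ruled out because it would send $A^2B$ to $A^{-2}B\inv$, which differs from $(A^2B)\inv=B\inv A^{-2}$ by the nonzero central element $[A^2,B]=[A,B]^2$ and so is not in $F\cup F\inv$ — shows $\aut_e(H)$ is trivial, i.e.\ $H$ is a GRR of $H_3(\Z)$. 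Since $H_3(\Z)$ is torsion-free, no two vertices of $H$ have the same neighbourhood, so every $\sigma\in\aut(G)$ preserves the partition of $V(G)$ into fibres $\{(0,g),(1,g)\}$; $\sigma$ therefore induces an automorphism of the quotient graph $H$ and a flip over each fibre, and conversely all such pairs extend to automorphisms of $G$. Hence $\aut(G)\cong\big(\prod_{g\in H_3(\Z)}C_2\big)\rtimes H_3(\Z)$, with $H_3(\Z)$ translating the $H_3(\Z)$-coordinate and the flips acting fibrewise. In particular $R:=\aut_e(G)\cong\prod_{g\ne e}C_2$ is uncountable, and since $R$ acts freely transitively on $\cd$, $|\cd|=|\aut_e(G)|>1$.

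\emph{The $\Gamma$-action on $\cd$.} Identify $R$ with the flip-patterns $\rho\colon H_3(\Z)\to C_2$ satisfying $\rho(e)=0$. The image of $(\epsilon,g)\in\Gamma$ in $\aut(G)$ is ``translate by $g$, then flip every fibre by $\epsilon$'', and the constant flip is central in $\aut(G)$; substituting into $\gamma\cdot r=\ip{r(\gamma\inv)}\inv\,r\,\ip{\gamma\inv}$ from Proposition \ref{cdtor} and simplifying gives $((\epsilon,g)\cdot\rho)(h)=\rho(g\inv h)+\rho(g\inv)$. Thus the action factors through $\Gamma\twoheadrightarrow H_3(\Z)$ and is just the left shift on $\prod_{H_3(\Z)}C_2$, renormalized at $e$; equivalently, the $\Gamma$-system $(\cd,\mu)$ is the Bernoulli shift $(C_2^{H_3(\Z)},\mathrm{unif})$ modulo its diagonal (global-flip) $C_2$-action.

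\emph{The factor map.} With two i.i.d.\ labels per vertex, let $\beta_g(x)=1$ if $x(0,g)>x(1,g)$ and $0$ otherwise, and set $\widehat\Phi(x)(g)=\beta_g(x)+\beta_e(x)\pmod 2$, so $\widehat\Phi(x)\in R$ almost surely. One then checks equivariance of $\widehat\Phi$ against the central $C_2\le\Gamma$, the translations, and the rotations $r\in R$, which together generate $\aut(G)$ — the correction term $\beta_e(x)$ is exactly what cancels the global flip produced by the central $C_2$. (Here one uses that the conjugating map $c\mapsto h_c$ of Proposition \ref{cdtor} is $\aut(G)$-equivariant, not just $\Gamma$-equivariant.) Since $\lambda^\Gamma$ is $\aut(G)$-invariant, $\widehat\Phi_*\lambda^\Gamma$ is an $\aut(G)$-invariant measure on $\cd$, hence equals the unique such measure $\mu$; so $\widehat\Phi$ is an $\aut(G)$-factor map $([0,1]^\Gamma,\lambda^\Gamma)\to(\cd,\mu)$, i.e.\ an $\aut(G)$-\fiid Cayley diagram. (Alternatively, the previous step already writes $\cd$ as a $\Gamma$-factor of a Bernoulli shift, and the Ornstein--Weiss theorem recalled just before Proposition \ref{cdtor} upgrades $\Gamma$-\fiid to $\aut(G)$-\fiid.)

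I expect the bulk of the work — and the only real obstacle — to be the bookkeeping in the middle two steps: getting $\aut(G)$ precisely right and, above all, pushing the $\Gamma$-action through Proposition \ref{cdtor} into the clean ``Bernoulli modulo global flip'' form. Once that is in place, the factor map is immediate.
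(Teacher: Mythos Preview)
Your proposal is correct and follows essentially the same route as the paper: identify $\aut_e(G)$ with flip-patterns $C_2^{H_3(\Z)\setminus\{e\}}$, compute the $\Gamma$-action as the renormalized shift, and write down the factor map ``compare the two fibre labels, then subtract the value at $e$''. The only notable differences are cosmetic --- you verify the GRR by an abelianization argument rather than inspection, describe $G$ (correctly) as $H[\overline{K_2}]$ rather than the paper's ``replace each edge by $K_4$'', and offer a direct $\aut(G)$-equivariance check in place of the paper's appeal to Ornstein--Weiss after establishing $\Gamma$-equivariance.
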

\begin{proof}
Let $G'=\cay(H_3(\Z), F).$ By inspection of the unit ball in $G'$, this is a GRR of $H_3(\Z).$ The Cayley graph $G$ is obtained from $G'$ by replacing each edge in $G'$ with a copy $K_4.$


If $r\in \aut_e(G)$ is an automorphism of $G$, then by \cite[Theorem 1.11]{nilpotent} it descends to an automorphism of $G'$, which must be trivial. So $r$ decides independently for each $m\in H_3(\Z)$ whether or not to swap $(0,m)$ and $(1,m)$.

In terms of Cayley diagrams, the unique $\aut_e(G)$-invariant random $\bf c\in\cd$ can be gotten as follows: let $\bf x$ be an iid coin flip for each $m\in H_3(\Z)$ and set $\bf c((i,m),(j,n))=(i+j, mn\inv)$ if $\bf x(m)=\bf x(n)$ or $(1+i+j, mn\inv)$ otherwise. This is clearly fiid.

\end{proof}


The combinatorics of nonamenable groups are much wilder. One can write down artificial free products and amalgamations which do or don't admit (approximate) Cayley diagrams. We will show that trees groups do admit approximate $\aut(T_n)$-Cayley diagrams (generalizing a theorem of Lyons and Nazarov), and give an example of a marked group whose associated Bernoulli graphing has strictly larger approximate chromatic number than its Bernoulli shift graph. In fact, our construction answers a question of Weilacher. 

\begin{thm}
    If $\cay(\Gamma)=T_n$, then $T_n$ admits an $\aut(T_n)$-fiid approximate $\Gamma$-Cayley diagram.
\end{thm}
\begin{proof}
 Recall that if the Cayley graph of $\Gamma$ is a tree, then $\Gamma=C_2^{*n}*F_m$ for some $n,m\in\N$. We proceed by induction on $n$. We may assume we have $n+1$ independent labels on each vertex of $G$.

If $n=0$, then we want a Cayley diagram for a free group. If $m=1$, then $\Gamma$ is amenable, and we can apply Theorem \ref{approxamen}. Otherwise let $o$ be a balanced orientation of $\widetilde{ \s S}(\Gamma, E)$. One exists by \cite[Theorem 2.8]{me}. We want to label edges so that, away from a set of measure $\epsilon$, every vertex has one in-edge and one out-edge with each label. This is equivalent to approximately edge $m$-coloring the graph \[H=(\Fr([0,1]^{\Gamma})/R\times \{0,1\}, \{\{(x,0),(y,1)\}: (x,y)\in o\}).\] Note that $H$ is an acyclic and $m$-regular pmp graph. By Hatami--Lovasz--Szegedy, $H$ local-global contains $\widetilde{\s S}(T_m)$, and by Lyons--Nazarov, $T_m$ has an $\aut(T_m)$-fiid approximate edge $m$-coloring. So, $H$ has an appoximate edge $m$-coloring as desired.  

If $m=0$ and $n=1$ or $2$ then again $\Gamma$ is amenable, and so we can apply Theorem \ref{approxamen}.

Now suppose that $n+2m>3$. Then $\Gamma$ is nonamenable, so by the Lyons--Nazarov matching theorem \cite[Theorem 2.4]{LN} we can use the $n^{th}$ independent variable on each vertex to produce an $\aut(G)$-\fiid perfect matching $M$. Throwing away the edges in $M$ reduces us to looking at the $(n-1)$ case, and we're done by induction.    
\end{proof}



The difference between $\aut(G)$- and $\Gamma$-\fiid combinatorics can be reflected in vertex colorings. Recall that the approximate chromatic number of a marked group $(\Gamma, E)$ is the minimal $n$ so that $\s S(\Gamma, E)$ admits an approximate $n$-coloring. We will construct marked groups with isomorphic Cayley graphs but different approximate chromatic numbers, answering \cite[Problem 2]{felix}. 

Consider the groups \[\Gamma=C_2^{*3}\times C_3=\ip{a_1,a_2,a_3,\bx: a_i^2=[a_i,b]=b^3=1}\]
\[E = \{a_1,a_2,a_3,b\}\] and \[\Delta= \ip{a_1,a_2,a_3, \bx: a_i^2=b^3=abab=e},\]
\[F=\{a_1,a_2,a_3,b\}.\] Note that $\Delta$ is the semidirect product $C_2^{*3}\rtimes C_3$, where $C_2$ acts on $C_3$ by inversion, and $\cay(\Gamma, E)\cong \cay(\Delta, F)$.

\begin{thm}\label{3coloring}
$\cay(\Gamma, E)$ admits a $\Gamma$-\fiid 3-coloring.
\end{thm}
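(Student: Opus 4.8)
The plan is to identify $\cay(\Gamma,E)$ concretely, read off the shape of its proper $3$-colourings, strip off the cyclic direct factor $C_3$, and finish with a combinatorial $\fiid$ argument on the $3$-regular tree. First I would record that $\cay(C_2^{*3},\{a_1,a_2,a_3\})$ is the $3$-regular tree $T_3$ and $\cay(C_3,\{b,b\inv\})$ is a triangle $K_3$, so $G=\cay(\Gamma,E)$ is the Cartesian product $T_3\mathbin{\square}K_3$: over each vertex $g$ of $T_3$ sits a triangle fibre $\{(g,0),(g,1),(g,2)\}$, and $(g,j)$ is joined to $(ga_i,j)$ for every generator $a_i$ and every $j\in\Z/3$. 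By Proposition~\ref{meastoiid} it suffices to produce a Borel proper $3$-colouring of the shift graph $\sh$.

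Next I would analyse an arbitrary proper $3$-colouring $c$. On the fibre over $g$ all three colours occur, so $c$ restricts there to a bijection $\tau_g\colon\Z/3\to\{0,1,2\}$; since $(g,j)$ and $(ga_i,j)$ are adjacent for every $j$, $\tau_g$ and $\tau_{ga_i}$ disagree everywhere, so $\tau_{ga_i}\tau_g\inv$ is a $3$-cycle, and as $T_3$ is connected and $3$-cycles preserve cyclic orientation all the $\tau_g$ share one orientation. Rather than show this is exhaustive I would simply \emph{construct} a colouring with every $\tau_g$ orientation-preserving; these are exactly the colourings of the form $c(g,j)=\beta(g)+j$ with $\beta\colon T_3\to\Z/3$ satisfying $\beta(g)\ne\beta(ga_i)$, i.e.\ with $\beta$ a proper $3$-colouring of $T_3$. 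A short calculation shows that $\gamma=(h,\ell)\in\Gamma$ sends such a $c$ to the colouring with colour function $g\mapsto\beta(h\inv g)-\ell$; so the task is to build a $\Gamma$-equivariant measurable map $[0,1]^\Gamma\to\{\text{proper $3$-colourings of }T_3\}$ with this \emph{twisted} equivariance.

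The key step is to remove the twist. Letting $\rho(g)$ be the $j\in\Z/3$ that maximises $x(g,j)$, the fibre of $(h,\ell)\cdot x$ over $g$ is the fibre of $x$ over $h\inv g$ cyclically rotated by $\ell$, whence $\rho(g)\mapsto\rho(h\inv g)+\ell$; thus $-\rho$ carries exactly the twist that $\beta$ is required to have. Writing $\beta=\lambda-\rho$ reduces the problem to finding a measurable $\lambda\colon[0,1]^\Gamma\to(\Z/3)^{T_3}$ that is \emph{untwisted}, meaning invariant under the $\{e\}\times C_3$ factor --- equivalently, a genuine $\fiid$ process over $C_2^{*3}$ built from the $C_3$-invariant part of the i.i.d.\ data --- such that for every edge $\{g,g'\}$ of $T_3$
\[\lambda(g)-\lambda(g')\ \ne\ r_{gg'}:=\rho(g)-\rho(g').\]
Since the forbidden differences $r_{gg'}$ are themselves $C_3$-invariant, this has become a plain local constraint-satisfaction problem for $C_2^{*3}$ on its Cayley graph $T_3$ (on each edge two of the three colour-differences are allowed), and $C_3$ has disappeared. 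It is worth noting that the naive alternatives do not work: one cannot make a $\fiid$ choice of a global $\Z/3$ ``phase'' --- ergodicity of the Bernoulli shift of $C_2^{*3}$ forbids any $\{e\}\times C_3$-equivariant constant $\Z/3$-value --- and adding $\pm\rho$ to an untwisted proper $3$-colouring of $T_3$ destroys properness. The point of subtracting $\rho$ is precisely to trade this rigid ``phase'' obstruction for the slack tension problem above: $\lambda(g)=\lambda(g')$ is always permitted when $r_{gg'}\ne0$, while the edges with $r_{gg'}=0$ span only finite subtrees (as $\tfrac13<\tfrac12$).

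The hard part is then to solve this tension problem over the \emph{non-amenable} group $C_2^{*3}$ by a factor of i.i.d. Here I would run a recursive construction of the type used for $\fiid$ proper $3$-colourings of $T_3$: use the i.i.d.\ labels to build a hierarchical exhaustion of $T_3$ by finite connected subtrees that are kept pairwise far apart --- possible because the shift graph of $C_2^{*3}$ is a non-amenable Borel forest, the same mechanism underlying $\fiid$ perfect matchings of $T_3$ --- and then define $\lambda$ layer by layer, extending the partial solution outward from the already-coloured far-apart subtrees. On a tree each newly-coloured vertex meets at most one (occasionally two) already-coloured neighbour, each imposing one forbidden difference-value out of three, so the greedy extension never gets stuck and terminates on every finite piece, yielding a Borel $\lambda$. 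Composing, $\beta=\lambda-\rho$ and $c(g,j)=\beta(g)+j$ give the desired $\Gamma$-$\fiid$ proper $3$-colouring. The main obstacle is this last step: arranging the recursion so that $\lambda$ is genuinely Borel and $\{e\}\times C_3$-invariant; the earlier steps are essentially bookkeeping, their one substantive idea being the $\rho$-subtraction that turns the rigid phase problem into a flexible tree constraint.
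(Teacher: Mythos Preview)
Your reduction is correct and in fact equivalent to the paper's. The paper phrases the target as finding a measurable independent set $A\subseteq\Fr$ meeting every $C_3$-orbit exactly once; choosing such an $A$ is the same data as choosing your $\lambda$: once a fibrewise reference like your $\rho$ is fixed, the element of $A$ in the orbit $[x]$ is $b^{\alpha([x])-\rho(x)}\cdot x$ for some $\alpha\colon\Fr/C_3\to\Z/3$, and independence of $A$ across fibres is precisely your constraint $\alpha([x])-\alpha([a_i\cdot x])\neq r_i([x])$. So both proofs land on the same tension problem on the tree; the paper reaches it in one line by citing Weilacher, while you unpack the colouring structure of $T_3\mathbin{\square}K_3$ explicitly and introduce the $\rho$-subtraction, which is a nice way to see the equivariance.

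The gap is in how you solve that tension problem. You propose a ``hierarchical exhaustion of $T_3$ by finite connected subtrees kept pairwise far apart,'' followed by greedy extension, and justify it by non-amenability and an appeal to the Lyons--Nazarov mechanism. But a nested exhaustion by far-apart finite pieces is a toast structure, and those exist for \emph{hyperfinite} relations; non-amenability of $C_2^{*3}$ is an obstruction here, not a help. The Lyons--Nazarov mechanism you cite is not a layered greedy build-up at all: it is an \emph{augmenting-path} argument --- start with a partial solution, flip along short augmenting chains to enlarge it, use expansion so that the defect set becomes $n$-separated after stage $n$ and hence has exponentially small measure, and invoke Borel--Cantelli for almost-sure convergence. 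That is exactly what the paper does: it starts from any maximal independent $A$, shows that any augmenting chain $(x,v_1,\dots,v_n,y)$ between two missed $C_3$-orbits can be resolved inside its $C_3$-saturation, iterates over chains of bounded length, and concludes by Borel--Cantelli. Your percolation remark that the $\{r=0\}$ edges span only finite subtrees is a pleasant heuristic for why the problem should be soft, but it does not by itself produce a Borel $\lambda$; the augmenting-chain step is where the actual content of the proof lives, and your write-up does not supply it.
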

\begin{proof}
As in \cite{felix} it suffices to find a measurable independent set $A$ which intersects each $C_3$ orbit in $[0,1]^\Gamma$. Then we can have a 3-coloring of the shift graph given by $c(v)={\min\{i: b^i\cdot x\in A\}}$. We construct such an $A$ by an augmenting chain argument.

Any independent set meets each $C_3$ orbit at most once. And if an indepdendent set cannot be extended to meet a $C_3$ orbit then is must meet each of the three adjacent orbits. If we have any two such missing orbits in the same component, we can augment $A$ along the path between them as follows.

Given an independent set $A$, say that $x,v_1,...,v_n,y$ is an augmenting chain of length $n$ for $A$ if $x$ and $v_1$ are neighbours, $v_n$ and $y$ are neighbours, $x,y\not\in A$, $v_i\in A$, and $C_3\cdot x,C_3\cdot v_1,..., C_3\cdot y $ is a simple path in $G/C_3.$

We show that if $p=(x,v_1,...,v_n,y)$ is an augmenting chain for $A$, then there is an independent set $A'$ such that $A'\triangle A\subseteq C_3\cdot p$, and $A'$ meets the $C_3$ orbit of $x$ and every $v_i$.

 If $x$ has a neighbour outside of $C_3\cdot p$ as well, then by pigeonhole there is some point of $C_3\cdot x$ with no neighbour in $A$, say $x'$. Set $A'=A\cup \{x'\}$. Otherwise, for each $i$, $C_3\cdot v_i$ contains two points not adjacent to elements $A$ outside of $p$, say $v_i, v_i'$. Put $x$ in $A'$ and swap $v_1$ for $v_1'$. If $v_1'$ is adjacent to $v_2$, swap $v_2$ for $v_2'$. Continue in this manner until we we no longer need to swap. This process must terminate when we reach $y$.

Now as usual, we iteratively augment along all chains of length at most $n$ to produce larger and larger independent sets $A_n$. Since the $C_3$ orbits which avoid $A_n$ are spaced at least $n$ vertices apart, the set of vertices whose orbits miss $A_n$ decays exponentionally in measure. A Borel-Cantelli argument then shows that this process converges almost everywhere (cf the proof of \cite[Theorem 2.4]{LN} or \cite[Theorem 2.4]{me}).

\end{proof}

\begin{thm}
$\cay(\Delta, F)$ does not admit a $\Delta$-\fiid approximate 3-coloring.
\end{thm}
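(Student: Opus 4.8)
The plan is to show that any $\Delta$-f.i.i.d.\ approximate $3$-coloring of $\cay(\Delta,F)$ would, after quotienting by the $C_3$-subgroup $\langle b\rangle$, produce an approximate coloring-type structure on the shift graph of $C_2^{*3}$ that is incompatible with the action of $C_2^{*3}$ being forced (by the $abab=e$ relations) to permute the three color classes nontrivially inside each $\langle b\rangle$-orbit. The key point is that in $\Delta$ the generator $a_i$ satisfies $a_i b a_i b = e$, i.e.\ $a_i b a_i^{-1} = b^{-1}$, so conjugation by $a_i$ reverses the cyclic order on each $\langle b\rangle$-orbit; this orientation-reversal is the feature absent in $\Gamma$ (where $b$ is central) and is what will obstruct an approximate $3$-coloring.

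Concretely, I would first record that an approximate $3$-coloring $c_n:\s S(\Delta,F)\to\{0,1,2\}$ lets us measurably (approximately) compare colors along $\langle b\rangle$-orbits, which are $3$-cycles. Passing to the quotient measure space $X=\Fr/\langle b\rangle$, the three vertices of each $\langle b\rangle$-triangle get three distinct colors on a large-measure set, so they induce a cyclic orientation of each triangle, equivalently a $\Z/3$-valued ``coordinate'' on the orbit; the $a_i$-edges of $G$ descend to a $C_2^{*3}$-action on $X$, and the relation $a_i b a_i b=e$ forces each $a_i$ to send the cyclic orientation of an orbit to the \emph{opposite} orientation of the target orbit (up to small error). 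So on $X$ we would obtain an approximate $C_2^{*3}$-equivariant map to $\Z/3$ that flips sign along every generator — i.e.\ an approximate homomorphism-like cocycle $C_2^{*3}\to\Z/3$ detecting the parity of word length in the $a_i$. But $C_2^{*3}$ surjects onto $\Z/2$ by total $a$-length mod $2$, and the obstruction is that the shift action of $C_2^{*3}$ is \emph{ergodic} (indeed the shift graph is connected a.e.), so there is no invariant measurable $2$-coloring of orbits compatible with sign-flipping along all generators except on a null set — and the approximate version fails because the boundary of any candidate ``even/odd'' set has definite positive measure (the graph $\s S(C_2^{*3},\{a_1,a_2,a_3\})$ is an expander / has no approximate bipartition since it has odd-girth $1$? no—rather it is non-bipartite and nonamenable). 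The cleanest route is: such a structure would give an approximate homomorphism from the nonamenable group $C_2^{*3}$ (which is a free product, hence has property of being non-bipartite with spectral gap on the shift graph) onto $\Z/2$, contradicting that f.i.i.d.\ factors of a nonamenable group's shift cannot weakly contain a nontrivial finite factor in this way — formally, one invokes that the shift graph $\s S(C_2^{*3}, \{a_i\})$ admits no approximate $2$-coloring because a $2$-coloring would be a nontrivial f.i.i.d.\ homomorphism to $\Z/2$ and the Bernoulli shift of a nonamenable group is strongly ergodic.

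The steps, in order: (1) reduce from $\Delta$-f.i.i.d.\ to measurable/approximate decorations of $\s S(\Delta,F)$ via Proposition~\ref{meastoiid}; (2) use the $3$-cycles $\langle b\rangle x$ and a large-measure set on which $c_n$ is injective on these cycles to extract, on the quotient $X=\Fr/\langle b\rangle$, an approximate $\Z/3$-orientation of each $b$-triangle; (3) translate the relation $a_i b a_i = b^{-1}$ into the statement that the induced $C_2^{*3}$-action on $X$ reverses these orientations along each generator, up to error tending to $0$; (4) conclude that composing with the sign map $\Z/3^{\times?}$... more precisely that the parity of the orientation-reversal gives an approximate $C_2^{*3}$-invariant partition of $X$ into two pieces swapped by every $a_i$, i.e.\ an approximate $2$-coloring of $\s S(C_2^{*3},\{a_i\})$; (5) derive a contradiction from strong ergodicity of the Bernoulli shift of the nonamenable group $C_2^{*3}$, which forbids such approximate $2$-colorings. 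I expect step (3)–(4) — carefully bookkeeping how the cyclic orientations transform under the $a_i$-edges so that the error stays controlled, and isolating exactly the $\Z/2$-valued invariant — to be the main obstacle, together with making precise in step (5) the statement that a nonamenable group's shift graph with an odd generating relation structure has no approximate $2$-coloring (this should follow from strong ergodicity: any approximate $2$-coloring would be an approximately invariant set of measure $\tfrac12$, contradicting strong ergodicity of $(\Fr,s_\Delta)$ restricted to the $C_2^{*3}$-action, which is Bernoulli over a nonamenable group).
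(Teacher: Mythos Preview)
Your proposal is correct and follows essentially the same route as the paper: the paper's function $c_f(x)=f(b\cdot x)-f(x)\in\{\pm 1\}$ is exactly your ``cyclic orientation'' of each $\langle b\rangle$-triangle, the relation $a_ib a_i=b^{-1}$ is used in the same way to show this orientation flips across $a_i$-edges, and the contradiction is obtained from strong ergodicity of the Bernoulli shift of the nonamenable group $C_2^{*3}$. One small correction to your step~(5): the color classes of the resulting $2$-coloring are not approximately invariant under $C_2^{*3}$ itself (each $a_i$ swaps them), but under the index-$2$ subgroup of even-length words, which is still nonamenable and whose restricted Bernoulli action is still strongly ergodic---this is precisely how the paper phrases it.
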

\begin{proof}
 Suppose $f$ is a measurable vertex labelling of the shift graph which meets the 3-coloring constraint on the $C_3$-orbit of $x$ and all of its neighbours. Again following \cite{felix}, let $c_f(x)=f(b\cdot x)-f(x)\in C_3$. Then
 \begin{enumerate}
     \item $c_f$ takes values in $\{\pm 1\}$, otherwise $f(b\cdot x)=f(x)$
     \item $c_f$ is constant on $C_3$ orbits, otherwise we have $f(b^2\cdot x)=f(x)+1-1=f(x)$ for some $x$.
     \item $c_f$ is different on adjacent orbits, otherwise for some $j$, $f(b^i\cdot x)=f(x)+i$, and $f(a_jb^i\cdot x)=f(b^{-i}a_j\cdot x)=f(a_j\cdot x)-i$, which is a contradiction since $m+i=n-i$ always has a solution in $C_3$.
     \end{enumerate}
 
 Thus an approximate $3$-coloring of $\s S(\Delta, F)$ yields an approximate $2$-coloring of $\s F(C_2^{*3},\{a_1,a_2,a_3\} )$ as follows. Define \[g:([0,1]^3)^{C_2^{*3}}\rightarrow [0,1]^\Delta\] by
 \[g(x_1,x_2,x_3)(\gamma b^i)=x_i(\gamma)\] for $\gamma\in C_2^{*3}$. Then if $\ip{f_n:n\in\N}$ is an approximate 3-coloring of $\s S(\Delta, F)$, we have an approximate 2-coloring of $\s F(C_2^{*3},\{a_1,a_2,a_3\} )$ given by $\ip{c_{f_n}\circ g:n\in\N}$. But then each color set is approximately invariant under the group elements of even length, contradicting strong ergodicity.
\end{proof}

\begin{cor}
$\cay(\Gamma, E)$ does not admit an approximate $\aut(\cay(\Gamma, E))$-\fiid Cayley diagram.
\end{cor}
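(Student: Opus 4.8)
The plan is to chain together the results already established in the excerpt. We have just shown (in the two preceding theorems) that $\cay(\Gamma, E)$ admits a $\Gamma$-\fiid $3$-coloring but that $\cay(\Delta, F)$ does \emph{not} admit a $\Delta$-\fiid approximate $3$-coloring, and we recorded at the outset that $\cay(\Gamma, E)\cong\cay(\Delta, F)$. Write $G$ for this common Cayley graph. The key observation is that the $3$-coloring constraint $\varphi$ is a single local constraint in the sense of Section \ref{formalism}, so both parts of Theorem \ref{transfer} apply to it.

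First I would argue by contradiction: suppose $G$ admits an $\aut(G)$-\fiid Cayley diagram. Since $G=\cay(\Delta,F)$ and $G$ admits an $\aut(G)$-\fiid Cayley diagram, part (1) of Theorem \ref{transfer} (applied with the marked group $(\Delta,F)$ and the constraint $\varphi$ = ``proper $3$-coloring'') tells us that $G$ admits an $\aut(G)$-\fiid $3$-coloring if and only if it admits a $\Delta$-\fiid $3$-coloring. On the other hand, since $G=\cay(\Gamma,E)$ as well, the same theorem applied with the marked group $(\Gamma,E)$ tells us $G$ admits an $\aut(G)$-\fiid $3$-coloring if and only if it admits a $\Gamma$-\fiid $3$-coloring. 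By Theorem \ref{3coloring}, the latter holds, so $G$ admits an $\aut(G)$-\fiid $3$-coloring, hence a $\Delta$-\fiid $3$-coloring. But a ($\Delta$-\fiid) $3$-coloring is in particular a $\Delta$-\fiid approximate $3$-coloring (take the constant sequence), contradicting the previous theorem. Therefore no $\aut(G)$-\fiid Cayley diagram exists.

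One technical point worth spelling out, since there is no real obstacle but it is the only place care is needed: the hypothesis of Theorem \ref{transfer} is stated for ``$G=\cay(\Gamma,E)$'' with a fixed marking, and here the same abstract graph carries two different markings. Nothing goes wrong because an $\aut(G)$-\fiid Cayley diagram for one marking is a statement about the abstract graph $G$ together with its group $\aut(G)$ --- indeed $\aut(G)$ does not depend on the marking --- and in each application of Theorem \ref{transfer} we only use the marking that is relevant to the conclusion being drawn (the $(\Delta,F)$-marking to talk about $\Delta$-\fiid colorings, the $(\Gamma,E)$-marking to talk about $\Gamma$-\fiid colorings). If one prefers to avoid even this mild bookkeeping, one can instead invoke part (2) of Theorem \ref{transfer} with the $(\Delta,F)$-marking: an $\aut(G)$-\fiid Cayley diagram is in particular an $\aut(G)$-\fiid approximate Cayley diagram, so $G$ admits an $\aut(G)$-\fiid approximate $3$-coloring iff it admits a $\Delta$-\fiid approximate $3$-coloring; the forward direction of this, combined with the fact (via the $(\Gamma,E)$-marking and Theorem \ref{3coloring}) that $G$ has an honest, hence approximate, $\aut(G)$-\fiid $3$-coloring, again contradicts the previous theorem. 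Either route is a three-line deduction; the substance of the result is entirely in the theorems already proved, so I expect no genuine obstacle in writing this up.
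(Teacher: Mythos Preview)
Your proposal is correct and matches the paper's approach: the paper gives no explicit proof for this corollary, but the introduction signals exactly your deduction (``By the lifting theorems, this implies $\cay(\Gamma,E)$ does not admit an automorphism \fiid approximate Cayley diagram'').

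One small correction to your handling of the technical point. An $\aut(G)$-\fiid Cayley diagram \emph{does} depend on the marking --- a Cayley diagram for $(\Gamma,E)$ is a genuinely different labelling from one for $(\Delta,F)$, since the relations being witnessed are different --- so your claim that it ``is a statement about the abstract graph $G$'' is not right, and you cannot simply invoke Theorem~\ref{transfer} with the $(\Delta,F)$-marking using a $(\Gamma,E)$-Cayley diagram as the hypothesis. The reason your argument nonetheless goes through is different: the descent direction of Theorem~\ref{transfer} (from $\aut(G)$-\fiid to $\Delta$-\fiid) holds unconditionally, since $\Delta$ sits inside $\aut(G)$ and any $\aut(G)$-equivariant factor map is automatically $\Delta$-equivariant. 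So the Cayley diagram hypothesis is only needed once, with the $(\Gamma,E)$-marking, to lift the $\Gamma$-\fiid $3$-coloring up to $\aut(G)$; the drop back down to $\Delta$ is free.
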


\section{Open questions}

We'll end with a handful of open questions. On the topic of graph limit, the biggest open problem is to characterize the set of limits of finite graphs. This can be thought of as a instance of a general soficity problem, and a solution does not seem to be in sight. Even the following appears to be open:

\begin{prb}
    Is there a regular treeing which is not a local-global limit of finite graphs?
\end{prb}

All the applications of local-global convergence in the literature involve $\Sigma_1$-definable properties. Ostensibly, the equivalence of elementary and $\Sigma_1$-convergence gives us more leverage. This leads to an open ended questions:

\begin{prb}
    Do any $\Sigma_n$ properties of atomless pmp graphs yield interesting finitary theorems?
\end{prb}

On the topic Cayley diagrams, all the exact fiid Cayley diagrams we can construct involve making iid choices about generators from a finite subgroup. Is this a necessary features of fiid Cayley diagrams?

\begin{prb}
    Is there a torsion free marked group $\Gamma$ so that $\cay(\Gamma)$ admits an $\aut(\cay(\Gamma))$-fiid Cayley diagram?
\end{prb}

For nonamenable groups, it is generaly a difficult problem to separate the combinatorics on measure $(1-\epsilon)$ sets from combinatorics on measure $1$ sets. Tools like entropy estimates and strong ergodicity typically give numerically robust theorems. 

\begin{prb}
    Is there a nonamenable group $\Gamma$ so that $\cay(\Gamma)$ admits an approximate $\aut(\cay(\Gamma))$-fiid Cayley diagram, but no $\aut(\cay(\Gamma))$-fiid Cayley diagram?
\end{prb}

The answer to at least one of these must yes as witnessed by the $2n$-regular tree and the free group $F_n$. The following is still frustratingly open:

\begin{prb}
    Does the $d$-regular tree admit an $\aut(T_{d})$-fiid Cayley diagram for any marked group $\Gamma$?
\end{prb}

\bibliographystyle{plain}
\bibliography{refs}

\begin{thebibliography}{10}

\bibitem{msrthry}
H.~Bercovici, A.~Brown, and C.~Pearcy.
\newblock {\em Measure and Integration}.
\newblock Springer International Publishing, 2016.

\bibitem{BR}
Béla Bollobás and Oliver Riordan.
\newblock Sparse graphs: Metrics and random models.
\newblock {\em Random Structures \& Algorithms}, 39, 08 2011.

\bibitem{BurtonKechrisSurvey}
Peter~J. Burton and Alexander~S. Kechris.
\newblock Weak containment of measure-preserving group actions.
\newblock {\em Ergodic Theory and Dynamical Systems}, 40(10):2681–2733, 2020.

\bibitem{CKTD}
Clinton~T. Conley, Alexander~S. Kechris, and Robin~D. Tucker-Drob.
\newblock Ultraproducts of measure preserving actions and graph combinatorics.
\newblock {\em Ergodic Theory and Dynamical Systems}, 33(2):334–374, 2013.

\bibitem{gridsSurvey}
Jan Grebík and Václav Rozhoň.
\newblock Local problems on grids from the perspective of distributed algorithms, finitary factors, and descriptive combinatorics.
\newblock {\em Advances in Mathematics}, 431:109241, 2023.

\bibitem{HLS}
Hamed Hatami, L\'aszl\'o Lov\'as, and Bal\'asz Szegedy.
\newblock Limits of locally-globally convergent graph sequenes.
\newblock {\em Geometric and Functional Analysis}, 24:269--296, 2 2014.

\bibitem{Kechris}
A.~Kechris.
\newblock {\em Classical Descriptive Set Theory}.
\newblock Graduate Texts in Mathematics. Springer New York, 2012.

\bibitem{GlobalAspects}
Alexander Kechris.
\newblock {\em Global Aspects of Ergodic Group Actions}, volume 160.
\newblock AMS Mathematical Surveys and Monographs, 2010.

\bibitem{KM}
Alexander Kechris and Benjamin Miller.
\newblock {\em Topics in orbit equivalence}, volume 1852.
\newblock Springer LNM, 2004.

\bibitem{Lyons2016FactorsOI}
Russell Lyons.
\newblock Factors of iid on trees.
\newblock {\em Combinatorics, Probability and Computing}, 26(2):285–300, 2017.

\bibitem{LN}
Russell Lyons and Fedor Nazarov.
\newblock Perfect matchings as iid factors on non-amenable groups.
\newblock {\em European Journal of Combinatorics}, 32(7):1115--1125, 2011.

\bibitem{nilpotent}
David~Witte Morris, Joy Morris, and Gabriel Verret.
\newblock Isomorphisms of {Cayley} graphs on nilpotent groups.
\newblock {\em New York Journal of Mathematics}, 22:453--467, 2016.

\bibitem{CI}
Joy Morris.
\newblock The {CI} problem for infinite groups.
\newblock {\em The Electornic Journal of Combinatorics}, 23, 12 2016.

\bibitem{RahmanVirag}
Mustazee Rahman and B{\'a}lint Vir{\'a}g.
\newblock {Local algorithms for independent sets are half-optimal}.
\newblock {\em The Annals of Probability}, 45(3):1543 -- 1577, 2017.

\bibitem{me}
Riley Thornton.
\newblock Orienting borel graphs.
\newblock {\em Proceedings of the American Mathematical Society}, 150, 01 2020.

\bibitem{felix}
Felix Weilacher.
\newblock Marked groups with isomorphic cayley graphs but different borel combinatorics.
\newblock {\em Fundamenta Mathematicae}, 2018.

\bibitem{Survey}
Itaï~Ben Yaacov, Alexander Berenstein, C.~Ward Henson, and Alexander Usvyatsov.
\newblock {\em Model theory for metric structures}, page 315–427.
\newblock London Mathematical Society Lecture Note Series. Cambridge University Press, 2008.

\end{thebibliography}

\end{document}